\numberwithin{equation}{section}
\newtheorem{thm}{Theorem}[section]
\newtheorem{lem}{Lemma}[section]
\newtheorem{prop}{Proposition}[section]
\titleformat{\section}[hang]{\normalfont\large\bfseries}{\thesection. }{0pt}{}
\titlespacing*{\section}{0pt}{2ex}{2ex}
\titleformat{\subsection}[hang]{\normalsize\bfseries}{\thesubsection. }{0pt}{}
\titlespacing*{\section}{0pt}{2ex}{2ex}
\begin{document}
\title{Formation of singularities for the relativistic membrane equation with radial symmetry} 

\author[author1]{Lv Cai}
\author[author2]{Jianli Liu}
\date{\today}
\address{Department of Mathematics, Shanghai University, 99 Shangda Road, Shanghai, 200444, China}
\email{cailv@shu.edu.cn}
\email{jlliu@shu.edu.cn}
\maketitle

\let\thefootnote\relax
\footnotetext{MSC2020: 35L15, 35L67, 35Q75.} 

\begin{abstract}
The relativistic membrane equation can be rewritten as a first order hyperbolic system. 
Making use of the characteristic decomposition method, a new blow-up theorem is established. As an application, it demonstrates the formation of singularities for the relativistic membrane equation. Indeed, the singularity occurs when the hypersurface turns from being timelike to being null. This generalizes the result of Kong, Sun and Zhou's work for one-dimensional case [J Math Phys 47(1): 013503, 2006].

\end{abstract} 


\noindent{\textbf{Keywords}: relativistic membrane equation, singularity formation, radial symmetry, timelike hypersurfaces.}


\bigskip

\section{Introduction}
\par

The study of singularity for nonlinear wave equations has long been a central topic in geometric analysis and mathematical physics. Among these, the relativistic membrane equation occupies a particularly significant position as it describes the dynamics of timelike extremal hypersurfaces in the Minkowski spacetime $\mathbb{R}^{1+(n+1)} (n \geq 2)$.

Mathematically, the relativistic membrane equation can be derived as the Euler-Lagrange equation for the volume functional of timelike hypersurfaces.
Let $(t,x_{1},\cdots,x_{n+1})$ be coordinates in the $(1+(n+1))$-dimentional Minkowski spacetime endowed with the metric
\begin{equation}
	\mathrm{d}s^2 = - \mathrm{d} t^2 + \sum\limits_{i=1}^{n+1} \mathrm{d}x_{i}^2.
\end{equation}
We consider the hypersurface $S$ denoted by a graph
\begin{equation} \label{graph}
	x_{n+1}=\phi(t,x_{1},x_{2}, \cdots, x_{n}).
\end{equation} 
Then the induced metric on the hypersurface $S$ becomes
\begin{equation} \label{induced metric}
	\begin{aligned}
		\mathrm{d}s^2 &= - \mathrm{d}t^2 + \sum\limits_{i=1}^{n} \mathrm{d}x_{i}^2 + \mathrm{d}\phi^2 \\
		& = -( 1-|\partial_{t} \phi|^2 ) \mathrm{d}t^2 + \sum\limits_{i=1}^{n} (1+|\partial_{i} \phi|^{2})\mathrm{d}x_{i}^2 + 2  \sum\limits_{i=1}^{n} \partial_{t} \phi \partial_{i} \phi \mathrm{d}t \mathrm{d}x_{i} \\
		& \ \ \ \ + 2 \sum\limits_{i,j=1,i \neq j}^{i,j=n} \partial_{i} \phi\partial_{j} \phi \mathrm{d}x_{i}\mathrm{d}x_{j} \\
		& \triangleq ( \mathrm{d}t, \mathrm{d}x_{1}, \cdots \mathrm{d}x_{n} ) G ( \mathrm{d}t, \mathrm{d}x_{1}, \cdots \mathrm{d}x_{n} )^\mathrm{T} ,
	\end{aligned}
\end{equation}
where 
\begin{equation*} 
	G = \left( 
	\begin{array}{cccc}
		-( 1-|\partial_{t} \phi|^2 ) & \partial_{t} \phi \partial_{1} \phi  & \cdots & \partial_{t} \phi \partial_{n} \phi \\
		\partial_{t} \phi \partial_{1} \phi& 1+ |\partial_{1} \phi|^2  & \cdots & \partial_{1} \phi\partial_{n} \phi \\
		\vdots & \vdots  & \ddots & \vdots\\
		\partial_{t} \phi \partial_{n} \phi& \partial_{1} \phi \partial_{n} \phi  & \cdots & 1+ |\partial_{n} \phi|^2
	\end{array}
	\right).
\end{equation*}
We assume that this hypersurface is timelike, i.e., the induced metric is Lorentzian
\begin{equation} \label{time like condition}
	1+|\nabla_{x}\phi|^{2}-|\partial_{t} \phi|^{2} >0.
\end{equation}
Thus, the volume functional for the hypersurface is
\begin{equation} \label{vol f}
	\begin{aligned}
		\mathrm{V} (\phi) 	&= \iint_{\mathbb{R} \times \mathbb{R}^{n}} \sqrt{- \mathrm{det} G} \mathrm{d}t\mathrm{d}x_{1} \cdots \mathrm{d}x_{n} \\
		& =\iint_{\mathbb{R} \times \mathbb{R}^{n}} \sqrt{1+|\nabla_{x}\phi|^{2}-|\partial_{t} \phi|^{2}} \mathrm{d}t\mathrm{d}x_{1}\cdots \mathrm{d}x_{n}.
	\end{aligned}
\end{equation}
A relativistic membrane is a critical point of the volume functional \eqref{vol f}. Consequently, we can get
\begin{equation} \label{Euler-Lagrange}
	\frac{\partial}{\partial {t}}	\frac{\partial_{t} \phi}{\sqrt{1+|\nabla_{x}\phi|^{2}-|\partial_{t} \phi|^{2}}} - \sum\limits_{i=1}^{n} \frac{\partial}{\partial {x_{i}}}\frac{\partial_i \phi}{\sqrt{1+|\nabla_{x}\phi|^{2}-|\partial_{t} \phi|^{2}}}  = 0,
\end{equation}
where $\nabla_{x}\phi = ( \partial_{1} \phi,\partial_{2} \phi,\cdots, \partial_{n} \phi)$ and $\partial_{t} \phi,\partial_{i} \phi $ denote partial differentiation with respect to $t,x_{i}$, respectively. When $n=1$, the equation \eqref{Euler-Lagrange} is the relativistic string equation, associated with Born-Infeld theory  \cite{Brenier02,Gibbons98,Hoppe13}. Before we state our main result, let us introduce some necessary works in regard to the relativistic string and membrane.

As shown above, the geometric significance of the equation \eqref{Euler-Lagrange} is manifested through the induced metric \eqref{induced metric} on the hypersurface, where the timelike condition \eqref{time like condition} ensures the Lorentzian character of the metric. This condition is essential to the hyperbolicity of the equation \eqref{Euler-Lagrange}, see \cite{EggersHoppeHynek15,Wong11}. The system is totally linearly degenerate in the sense of Lax \cite{Lax86}, and the local well-posedness for smooth initial data was proved in \cite{AuriliaChristodoulou79}. On the other hand, \eqref{Euler-Lagrange} is a quasilinear wave equation whose nonlinearity satisfies the `double null condition' \cite{Klainerman84}, and it can be reformulated as an equation in divergence form \cite{Lindblad04}. Global existence results for small initial data were obtained in \cite{Allen06,Lindblad04} by exploiting these good structures from the perspective of the wave equation. The global existence of the initial boundary value problem of time-like extremal surface equation was proved in \cite{LiuZhou08,LiuZhou09} and the asymptotic behavior to global classical solutions was given in \cite{LiuZhou07}.

In the study of large initial data leading to a globally smooth solution, 
a useful way is to examine the stability of non-trivial solutions \cite{DonningerKS16}. Recently, Liu and Zhou \cite{LiuZhou24} gave the existence and global nonlinear stability of traveling wave solutions to the timelike extremal hypersurface equation in the Minkowski space. Another attempt to study the large-data problem for nonlinear wave equations involves the short-pulse method introduced by Christodoulou in \cite{Christodoulou09}. It turns out that this method was successfully utilized to prove the formation of black holes for the vacuum Einstein equation. Later, the ideas were adapted to the relativistic membrane equation in the two and three dimensions \cite{WangWei22}, proving the global existence of smooth solution with large data. All of these results regarding small perturbations of the trivial background solution depend on the `null structure' inherent to the equations.

For one-dimensional case, Wang and Wei \cite{WangWei23} addressed the global existence for the relativistic string within the setting that the `right travelling wave' was non-small while the `left travelling wave' had to be small enough. By the approach of characteristics, Kong and Tusji \cite{KongTsuji99} proposed sufficient and necessary conditions for global solutions to the general quasilinear system with linearly degenerate characteristics, which were applied to the relativistic string in \cite{KongZhang07}.

However, formation of singularities for the relativistic membrane equation remains less understood. Early work by Hoppe \cite{Hoppe13} explored singularities in relativistic membranes using algebraic methods. Eggers and Hoppe \cite{Eggers09} constructed swallowtail singularities and observed that solutions to the relativistic membrane equation approximate those of the eikonal equation near singularities \cite{EggersHoppeHynek15}. Therefore, global results for large data mentioned above \cite{WangWei22,WangWei23} were built on a framework that the timelike condition \eqref{time like condition} holds for all time. We refer to \cite{KongWei14,KongWeiZhang14,LaiZhu22} for singularity formation of other models with linearly degenerate characteristics.

This paper investigates the formation of singularities for the radially symmetric relativistic membrane equation with
\begin{equation}
	\phi (t,x_1, \cdots, x_n) = \phi (t,r),  
\end{equation}
where $r=\left(  \sum\limits_{i=1}^{n} x_i^2\right) ^{\frac{1}{2}}$. The volume functional \eqref{vol f} is
\begin{equation}\label{A(phi)r}
	\mathrm{V}(\phi) = \frac{2\pi^{\frac{n}{2}}}{\Gamma(\frac{n}{2}) }  \iint_{\mathbb{R} \times {\mathbb{R}^+}} r^{n-1} \sqrt{1+\phi_{r}^{2}-\phi_{t}^{2}} \mathrm{d}t \mathrm{d}r,
\end{equation}
where $\Gamma(n)=\int_{0}^{+\infty} e^{-x} x^{n-1} \mathrm{d}x $ is the Gamma function. Then the corresponding Euler-Lagrange equation \eqref{Euler-Lagrange} becomes
\begin{equation}\label{Euler-Lagrange r}
	\left(\frac{r^{n-1}\phi_{t}}{\sqrt{\Delta}} \right)_{t} - \left(\frac{r^{n-1}\phi_{r}}{\sqrt{\Delta}} \right)_{r} = 0,
\end{equation}
where $ \Delta \triangleq 1+\phi_{r}^{2}-\phi_{t}^{2} $. For $r>0$, it can be rewritten as a quasilinear wave equation
\begin{equation}\label{eq1}
	(1+\phi_{r}^{2}) \phi_{tt} - 2 \phi_{r} \phi_{t} \phi_{tr} -  (1-\phi_{t}^{2}) \phi_{rr} = \frac{(n-1)\phi_{r} \Delta}{r}.
\end{equation}
Consider the Cauchy problem with initial data
\begin{equation}\label{ID phi}
	t=0: \phi=\phi_0(r) \in H_{rad}^{s} (\mathbb{R}^n), \phi_t=\phi_1(r) \in H_{rad}^{s} (\mathbb{R}^n),
\end{equation}
where $s > \frac{n+2}{2} $. For the axially symmetric case, Wong \cite{Wong18} identified the singularity arising from the degeneration of the principal symbol of the evolution, distinct from shock-type singularities. In this work, we make progress by developing a new characteristic decomposition for the radially symmetric case, establishing precise conditions under which the timelike condition fails in finite time, as well as providing a geometric interpretation of the singularity formation process.

In this paper, we reformulate the relativistic membrane equation as a first order hyperbolic system \eqref{u v eq3}. A blow-up result of \eqref{u v eq3} was established by using the characteristic decomposition method, c.f. Theorem \ref{main thm}. As an application, we can prove that for radially symmetric initial data satisfying certain natural conditions, the solution will develop a singularity in finite time through the loss of hyperbolicity. This complements and extends Kong, Sun and Zhou's work \cite{KongSunZhou06} in one-dimensional case.

The rest of this paper is organized as follows. In Section \ref{sec pre}, we rewrite the relativistic membrane equation as a first order system. In Section \ref{sec proof}, we establish and prove a new blow-up theorem leading to the formation of singularities for the the relativistic membrane equation.



\section{Preliminaries} \label{sec pre}
Let
\begin{equation}\label{s1s2}
	\begin{aligned}
		s_{1}\triangleq\phi_{r} , \ s_{2}\triangleq-\frac{\phi_{t}}{\sqrt{\Delta}},
	\end{aligned}
\end{equation}
and define
\begin{equation}\label{uv}
	\begin{aligned}
		u\triangleq\frac{s_{1}s_{2}}{\sqrt{(1+s_{1}^{2})(1+s_{2}^{2})}}, \
		v\triangleq\sqrt{(1+s_{1}^{2})(1+s_{2}^{2})}.
	\end{aligned}
\end{equation}
The transformation maps $(\phi_{t},\phi_{r})$ to $(u,v)$ as
\begin{equation}\label{uv phi}
	\begin{aligned}
		u=-\frac{\phi_{r}\phi_{t}}{1+\phi_{r}^{2}}, \ 
		v=\frac{1+\phi_{r}^{2}}{\sqrt{\Delta}}.
	\end{aligned}
\end{equation}
We can rewrite the equation \eqref{eq1} as the following first order quasilinear system.

\begin{lem} \label{lem u v eq}
	The relativistic membrane equation \eqref{eq1} can be reduced to the following system
	\begin{equation} \label{u v eq2}
		\begin{aligned}
			\begin{split}
				\left\lbrace
				\begin{array}{lr}
					u_{t} + uu_{r} + v^{-3}v_{r} = - (n-1) r^{-1} v^{-2} \mathcal{F} (u,v) &\\ 
					v_{t} + v u_{r} + uv_{r} =- (n-1) r^{-1} uv, &
				\end{array}	
				\right.
			\end{split}	
		\end{aligned}
	\end{equation}
	where the nonlinear term
	\begin{equation} \label{def of F}
		\begin{aligned}
			\mathcal{F}(u,v) \triangleq  \frac{1}{2}[ ( v^{2}-u^{2}v^{2}-1 ) - \sqrt{(v^{2}-u^{2}v^{2}-1)^2 -4u^{2}v^{2} }  ].
		\end{aligned}
	\end{equation}

\end{lem}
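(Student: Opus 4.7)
The plan is to verify the system \eqref{u v eq2} by direct substitution, treating it as an algebraic rewriting of \eqref{eq1} under the change of variables \eqref{s1s2}--\eqref{uv}. First I would confirm that the two presentations \eqref{uv} and \eqref{uv phi} agree: inserting $s_{1}=\phi_{r}$ and $s_{2}=-\phi_{t}/\sqrt{\Delta}$ into \eqref{uv} gives $1+s_{2}^{2}=(1+\phi_{r}^{2})/\Delta$, hence $v=(1+\phi_{r}^{2})/\sqrt{\Delta}$ and $u=s_{1}s_{2}/v=-\phi_{r}\phi_{t}/(1+\phi_{r}^{2})$.

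For the second equation of \eqref{u v eq2}, I would rewrite it in divergence form $v_{t}+(uv)_{r}=-(n-1)r^{-1}uv$. Differentiating $v=(1+\phi_{r}^{2})/\sqrt{\Delta}$ and $uv=-\phi_{r}\phi_{t}/\sqrt{\Delta}$ by the chain rule and adding, the $\phi_{rt}$--contributions cancel using $\phi_{rt}=\phi_{tr}$, leaving
\[
v_{t}+(uv)_{r}=\frac{\phi_{t}}{\Delta^{3/2}}\bigl[(1+\phi_{r}^{2})\phi_{tt}-2\phi_{r}\phi_{t}\phi_{tr}-(1-\phi_{t}^{2})\phi_{rr}\bigr].
\]
Substituting \eqref{eq1} for the bracket gives $(n-1)\phi_{r}\phi_{t}/(r\sqrt{\Delta})=-(n-1)uv/r$, which is the claimed identity.

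The first equation follows from an analogous computation. Expanding $u_{t}+uu_{r}+v^{-3}v_{r}$ in $\phi$--derivatives and collecting terms, the coefficient of $\phi_{rr}$ simplifies via the identity $\phi_{t}^{2}(1-\phi_{r}^{2})+(1+\phi_{r}^{2}-2\phi_{t}^{2})=(1+\phi_{r}^{2})(1-\phi_{t}^{2})$; the coefficient of $\phi_{tr}$ reduces to $2\phi_{r}^{2}\phi_{t}/(1+\phi_{r}^{2})^{2}$ after applying commutativity; the coefficient of $\phi_{tt}$ is $-\phi_{r}/(1+\phi_{r}^{2})$. These assemble cleanly into
\[
u_{t}+uu_{r}+v^{-3}v_{r}=-\frac{\phi_{r}}{(1+\phi_{r}^{2})^{2}}\bigl[(1+\phi_{r}^{2})\phi_{tt}-2\phi_{r}\phi_{t}\phi_{tr}-(1-\phi_{t}^{2})\phi_{rr}\bigr],
\]
and invoking \eqref{eq1} yields $-(n-1)\phi_{r}^{2}\Delta/[r(1+\phi_{r}^{2})^{2}]$.

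Finally I would identify $\mathcal{F}(u,v)$ with $\phi_{r}^{2}$. A direct computation from \eqref{s1s2}--\eqref{uv} gives $v^{2}-u^{2}v^{2}-1=s_{1}^{2}+s_{2}^{2}=\phi_{r}^{2}+\phi_{t}^{2}/\Delta$ and $u^{2}v^{2}=s_{1}^{2}s_{2}^{2}=\phi_{r}^{2}\phi_{t}^{2}/\Delta$, so $\phi_{r}^{2}$ and $\phi_{t}^{2}/\Delta$ are precisely the two roots of $X^{2}-(v^{2}-u^{2}v^{2}-1)X+u^{2}v^{2}=0$. The minus sign in \eqref{def of F} selects the smaller root, which equals $\phi_{r}^{2}$ in the regime $|\phi_{r}|\le|\phi_{t}|$ relevant to the singularity--formation setting. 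The main obstacle will be the algebraic grind in the first equation---tracking each second--order derivative coefficient and recognizing the principal combination of \eqref{eq1}---together with the care needed to justify the correct branch of $\mathcal{F}$.
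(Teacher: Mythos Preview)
Your proposal is correct and follows essentially the same direct-substitution approach as the paper: compute $u_{t}+uu_{r}+v^{-3}v_{r}$ and $v_{t}+vu_{r}+uv_{r}$ in terms of $\phi$-derivatives, recognize the principal part of \eqref{eq1}, and then identify $\phi_{r}^{2}$ as a root of the quadratic $X^{2}-(v^{2}-u^{2}v^{2}-1)X+u^{2}v^{2}=0$. Your remark that the minus branch in \eqref{def of F} picks out $\phi_{r}^{2}$ precisely when $|\phi_{r}|\le|\phi_{t}|$ is in fact more explicit than the paper, which simply writes ``We take $\phi_{r}^{2}=\mathcal{F}(u,v)$'' without isolating the condition.
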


\begin{proof}
	By the direct calculation, one has
	\begin{equation*}
		\begin{aligned}
			u_{t} &= (-\phi_{tr} \phi_{t}-\phi_{tt} \phi_{r}) (1+\phi_{r}^{2})^{-1} +2 \phi_{tr} \phi_{t} \phi_{r}^{2} (1+\phi_{r}^{2})^{-2} \\
			&= (1+\phi_{r}^{2})^{-2} [ - \phi_{tr}\phi_{t} (1-\phi_{r}^{2}) - \phi_{tt} \phi_{r}(1+\phi_{r}^{2})], \\
			u_{r} &= (-\phi_{rr} \phi_{t}-\phi_{tr} \phi_{r}) (1+\phi_{r}^{2})^{-1} + 2 \phi_{rr} \phi_{t} \phi_{r}^{2} (1+\phi_{r}^{2})^{-2}\\
			&= (1+\phi_{r}^{2})^{-2} [ - \phi_{rr}\phi_{t} (1-\phi_{r}^{2}) - \phi_{tr} \phi_{r}(1+\phi_{r}^{2})], \\
			v_{t} &= 2 \phi_{tr} \phi_{r} \Delta^{-\frac{1}{2}} +  (1+\phi_{r}^{2}) (\Delta^{-\frac{1}{2}})_{t} \\
			&= \Delta^{-\frac{3}{2}} [ \phi_{tr} \phi_{r} ( 1+\phi_{r}^{2} - 2 \phi_{t}^{2} ) + \phi_{tt} \phi_{t} (1+\phi_{r}^{2}) ], \\
			v_{r} &= 2 \phi_{rr} \phi_{r} \Delta^{-\frac{1}{2}} +  (1+\phi_{r}^{2}) (\Delta^{-\frac{1}{2}})_{r} \\
			&= \Delta^{-\frac{3}{2}} [ \phi_{rr} \phi_{r} ( 1+\phi_{r}^{2} - 2 \phi_{t}^{2} ) + \phi_{tr} \phi_{t} (1+\phi_{r}^{2}) ]. 
		\end{aligned}
	\end{equation*}
	Then,
	\begin{equation*}
		\begin{aligned}
			&\ \ \ \ u_{t} + uu_{r} + v^{-3}v_{r} \\
			&= (-\phi_{tr} \phi_{t}-\phi_{tt} \phi_{r}) (1+\phi_{r}^{2})^{-1} + 2 \phi_{tr} \phi_{t} \phi_{r}^{2} (1+\phi_{r}^{2})^{-2} \\
			& \ \ \ \ - \phi_{t} \phi_{r} [(-\phi_{rr} \phi_{t}-\phi_{tr} \phi_{r}) (1+\phi_{r}^{2})^{-2} + 2 \phi_{rr} \phi_{t} \phi_{r}^{2} (1+\phi_{r}^{2})^{-3}] \\
			& \ \ \ \ + [ \phi_{rr} \phi_{r} ( 1+\phi_{r}^{2} - 2 \phi_{t}^{2} ) + \phi_{tr} \phi_{t} (1+\phi_{r}^{2}) ](1+\phi_{r}^{2})^{-3} \\
			&= -(1+\phi_{r}^{2})^{-2} \phi_{r} [ \phi_{tt}(1+\phi_{r}^{2}) - 2\phi_{tr} \phi_{t}\phi_{r} - \phi_{rr}(1-\phi_{t}^{2}) ].
		\end{aligned}
	\end{equation*}
	Noting the equation \eqref{eq1}, we obtain
	\begin{equation*}
		\begin{aligned}
			u_{t} + uu_{r} + v^{-3}v_{r} = -(n-1) r^{-1}  v^{-2} \phi_{r}^{2}.
		\end{aligned}
	\end{equation*}
	It follows from the representation \eqref{uv} that the new variables ($u,v$) satisfy 
	\begin{equation*}
		\begin{aligned}
			0 \leq u \leq 1, v \geq 1.
		\end{aligned}
	\end{equation*}
	Besides, by \eqref{uv phi}, if $ \phi_{r} = 0$, then $u=0$. Otherwise, if $ \phi_{r}$ does not vanish, one can derive the equation of $\phi_{r}^{2}$
	\begin{equation*}
		\begin{aligned}
			\frac{u^{2} (1+\phi_{r}^{2})^{2}}{\phi_{r}^{2}} = (1+\phi_{r}^{2}) - \frac{(1+\phi_{r}^{2})^{2}}{v^{2}},
		\end{aligned}
	\end{equation*}
	or equivalently,
	\begin{equation} \label{F eq}
		\begin{aligned}
			\phi_{r}^{4} - (v^{2}-u^{2}v^{2}-1) \phi_{r}^{2} + u^{2}v^{2}= 0.
		\end{aligned}
	\end{equation}
	We take
	\begin{equation} \label{phi_r^2 A}
		\begin{aligned}
			\phi_{r}^{2}= \frac{1}{2}[ ( v^{2}-u^{2}v^{2}-1 ) - \sqrt{(v^{2}-u^{2}v^{2}-1)^2 -4u^{2}v^{2} }  ] = \mathcal{F}(u,v).
		\end{aligned}
	\end{equation}
	Here we note that by \eqref{uv}, one has
	\begin{equation*} 
		\begin{aligned}
			& v^{2}-u^{2}v^{2}-1=s_{1}^{2}+s_{2}^{2} \geq 0, \\
			&	(v^{2}-u^{2}v^{2}-1)^2 - 4u^{2}v^{2} = (s_{1}^{2}+s_{2}^{2})^2 - 4s_{1}^{2}s_{2}^{2} \geq 0,
		\end{aligned}
	\end{equation*}
	which ensures that \eqref{phi_r^2 A} makes sense. On the other hand,
	\begin{equation*}
		\begin{aligned}
			&\ \ \ \ v_{t} + uv_{r} + u_{r}v  \\
			&= \Delta^{-\frac{3}{2}} [ \phi_{tr} \phi_{r} ( 1+\phi_{r}^{2} - 2 \phi_{t}^{2} ) + \phi_{tt} \phi_{t} (1+\phi_{r}^{2}) ] \\
			& \ \ \ \ -\Delta^{-\frac{3}{2}} \phi_{t}\phi_{r} (1+\phi_{r}^{2})^{-1} [ \phi_{rr} \phi_{r} ( 1+\phi_{r}^{2} - 2 \phi_{t}^{2} ) + \phi_{tr} \phi_{t} (1+\phi_{r}^{2}) ] \\
			& \ \ \ \ +\Delta^{-\frac{1}{2}} (1+\phi_{r}^{2})^{-1} [ \phi_{rr}\phi_{t} (-1+\phi_{r}^{2}) - \phi_{tr} \phi_{r}(1+\phi_{r}^{2}) ] \\
			&= \Delta^{-\frac{3}{2}} \phi_{t} [ \phi_{tt}(1+\phi_{r}^{2}) - 2\phi_{tr} \phi_{t}\phi_{r} - \phi_{rr}(1-\phi_{t}^{2}) ].
		\end{aligned}
	\end{equation*}
	Again by \eqref{eq1}, we obtain
	\begin{equation*}
		\begin{aligned}
			v_{t} + uv_{r} + u_{r}v = -(n-1)r^{-1}  uv.
		\end{aligned}
	\end{equation*}
	The proof of Lemma \ref{lem u v eq} is completed.

\end{proof}

If one rescales the new variables $\hat{t} = (n-1)t, \ \hat{r} = (n-1)r$ and still denotes by $t,\ r$, then \eqref{u v eq2} becomes
\begin{equation} \label{u v eq3}
	\begin{aligned}
		\begin{split}
			\left\lbrace
			\begin{array}{lr}
				u_{t} + uu_{r} + v^{-3}v_{r} = - r^{-1} v^{-2} \mathcal{F} (u,v) & \\ 
				v_{t} + v u_{r} + uv_{r} =- r^{-1} uv . &
			\end{array}	
			\right.
		\end{split}	
	\end{aligned}
\end{equation}

\section{A Blow-up Result} \label{sec proof}

The initial data we consider are
\begin{equation} \label{ID}
	\begin{aligned}
		(u,v) (r,0) = (\bar{u},v_0)(r), \ \ r \in [r_1,r_2].
	\end{aligned}
\end{equation}
The following assumptions are made throughout this paper.
\begin{equation} \label{assumption}
	\begin{aligned}
		& \text{(A1)}\ 	0 < r_1 < r_2, \ v_0 \text{ is a positive constant}, \ \bar{u} (r) \in C^1 [r_1,r_2] , \ \bar{u}(r_2) >v_{0}^{-1}; \\
		& \text{(A2)} \ \bar{u}^{\prime} (r)+\frac{\bar{u} (r)+ \beta \mathcal{F}_{0}(r) }{r} < 0, \text{ for } \ r\in [r_1,r_2], \\
		& \ \ \ \ \ \ \text{ where }1 \leq \beta<\infty \text{ is a given constant }, \mathcal{F}_{0}(r) = \mathcal{F}_{0} (\bar{u}(r),v_{0});\\
		&\text{(A3)} \ \text{there exists an interval }	(\eta_1,\eta_2) \in (r_1,r_2) 	\text{ such that } \\
		& \ \ \ \ \ \ 0 < \frac{\eta_2-\eta_1}{\bar{u} (\eta_1) - \bar{u} (\eta_2) - 2v_{0}^{-1}} \\
		& \ \ \ \ \ \ < \min \left\lbrace \frac{\eta_1-r_1}{\bar{u} (r_1) - \bar{u} (\eta_1) + 2v_{0}^{-1}},\frac{r_2 -\eta_2}{\bar{u} (\eta_2) - \bar{u} (r_2) + 2v_{0}^{-1}} \right\rbrace .
	\end{aligned}
\end{equation}
Note that if $\bar{u} (\eta_1) - \bar{u} (\eta_2) > 2v_{0}^{-1}$ and $\eta_2-\eta_1$ is sufficiently small then the assumption (A3) can be satisfied. By analyzing the dynamics along characteristic curves and applying the assumptions (A1)-(A3), we shall show that the variable
$v$ of the classical solution to system \eqref{u v eq3} with initial data \eqref{ID} blows up in finite time, leading to the loss of hyperbolicity.

\begin{thm} \label{main thm}
	There exists a $t_{*}>0$ such that system \eqref{u v eq3} with initial data \eqref{ID} admits a classical solution in $\Lambda(t_{*})$ with
	\begin{equation} \label{thm infty} 
		\begin{aligned}
			\lim\limits_{t \rightarrow t_{*}^{-}} \left\| v (\cdot,t) \right\|_{0;\Lambda(t)} = + \infty. 
		\end{aligned}
	\end{equation}	
	Here $\Lambda(t)$ is a domain defined in \eqref{def of Lambda}.
	
\end{thm}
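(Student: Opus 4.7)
The plan is to exploit the characteristic structure of the system \eqref{u v eq3}. A short computation shows that the principal matrix has eigenvalues $\lambda_{\pm}=u\pm v^{-1}$ and Riemann invariants $R_{\pm}=u\mp v^{-1}$, diagonalizing the system as
\begin{equation*}
(\partial_{t}+\lambda_{\pm}\partial_{r})R_{\pm}=-\frac{\mathcal{F}(u,v)\pm uv}{rv^{2}},
\end{equation*}
together with the key identities $\lambda_{\pm}=R_{\mp}$ and $R_{-}-R_{+}=2/v$. The latter is decisive: $v$ blows up precisely when strict hyperbolicity is lost, i.e.\ when $R_{-}-R_{+}\to 0$.

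I would first invoke standard local existence for quasilinear hyperbolic systems and define $\Lambda(t)$ of \eqref{def of Lambda} as the region of determinacy bounded by the $+$-characteristic $L^{+}_{r_{1}}$ from $r_{1}$ and the $-$-characteristic $L^{-}_{r_{2}}$ from $r_{2}$. The constitutive relations \eqref{uv} give $0\le u\le 1$ and $v\ge 1$ on $\Lambda(t)$. Extracting from the second equation of \eqref{u v eq3} the auxiliary transport law $(\partial_{t}+u\partial_{r})(1/v)=(1/v)(ru)_{r}/r$ and using (A2) in the form $(r\bar{u})'(r)<-\beta\mathcal{F}_{0}(r)\le 0$, a short Gronwall argument propagates the stronger lower bound $v\ge v_{0}$ on $\Lambda(t)$ as long as the classical solution exists. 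This bounds $|\lambda_{\pm}|\le 2$ and the source terms uniformly by $C/r_{1}$.

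Next, I would follow the $-$-characteristic $\xi(t)$ starting from $\eta_{1}$ and the $+$-characteristic $\zeta(t)$ starting from $\eta_{2}$; the minimum condition in (A3) is designed precisely so that, by comparison with the initial-speed estimates for $L^{+}_{r_{1}}$ and $L^{-}_{r_{2}}$, both curves remain strictly inside $\Lambda(t)$ up to their would-be meeting. Transport of $R_{-}$ along $\xi$ and of $R_{+}$ along $\zeta$ yields, to leading order,
\begin{equation*}
R_{-}(t,\xi(t))=\bar{u}(\eta_{1})+v_{0}^{-1}+O(t),\qquad R_{+}(t,\zeta(t))=\bar{u}(\eta_{2})-v_{0}^{-1}+O(t).
\end{equation*}
Setting $d(t):=\zeta(t)-\xi(t)$ one has $d(0)=\eta_{2}-\eta_{1}>0$ and
\begin{equation*}
d'(t)=\lambda_{+}(t,\zeta(t))-\lambda_{-}(t,\xi(t))=R_{-}(t,\zeta(t))-R_{+}(t,\xi(t)).
\end{equation*}
An auxiliary estimate controlling the cross-family values $R_{-}|_{\zeta}$ and $R_{+}|_{\xi}$ via the nearby initial points of the associated $\mp$-characteristics (a Gronwall bootstrap on the source) then gives $d'(t)\le -[\bar{u}(\eta_{1})-\bar{u}(\eta_{2})-2v_{0}^{-1}]+o(1)$, so $d$ strictly decreases and must vanish at some $t_{*}\le(\eta_{2}-\eta_{1})/[\bar{u}(\eta_{1})-\bar{u}(\eta_{2})-2v_{0}^{-1}]$, finite and positive by (A3).

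Finally, I would close by contradiction. If the classical solution persisted up to $t_{*}$ with $v$ bounded on $\Lambda(t_{*})$, then $\lambda_{+}>\lambda_{-}$ strictly, and the characteristic coordinates $(\alpha,\beta)$, where $\alpha(P)$ (resp.\ $\beta(P)$) is the initial position of the $+$-characteristic (resp.\ $-$-characteristic) through $P$, would form a regular chart extending the initial diagonal $\alpha=\beta=r$, with the ordering $\alpha(P)<\beta(P)$ for $t>0$ preserved by continuity. But at the meeting point $P=(\zeta(t_{*}),t_{*})=(\xi(t_{*}),t_{*})$ one reads $\alpha(P)=\eta_{2}>\eta_{1}=\beta(P)$, a contradiction. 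Since \eqref{u v eq3} is totally linearly degenerate in the sense of Lax no shock can form, so the only admissible breakdown is $\lambda_{+}-\lambda_{-}=2/v\to 0$, giving \eqref{thm infty}. The main technical obstacle I expect is the cross-family estimate: because $R_{-}$ is not transported along $\zeta$ (nor $R_{+}$ along $\xi$), pinning down the leading coefficient in $d'(t)$ requires either an explicit bound via the Jacobian of the characteristic map (whose vanishing coincides with $v\to\infty$) or a careful bootstrap along the entire fan of characteristics rooted in $[r_{1},r_{2}]$.
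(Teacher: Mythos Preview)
Your outline has the right intuition (collapse of a characteristic tube forces $2/v\to 0$), but two of its load-bearing steps are not proved and would not follow from soft arguments.

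First, the claimed lower bound $v\ge v_{0}$. You derive the correct transport law $\partial_{0}(1/v)=(1/v)(ru)_{r}/r$ with $\partial_{0}=\partial_{t}+u\partial_{r}$, but assumption (A2) gives $(r\bar u)'<0$ only at $t=0$; propagating the sign of $(ru)_{r}$ for $t>0$ is precisely a derivative-level estimate, not a Gronwall corollary of the zero-order bounds $|u|\le 1$, $v\ge 1$. Second, the ``cross-family'' control of $R_{-}$ on $\zeta$ and $R_{+}$ on $\xi$ needed for $d'(t)\le -[\bar u(\eta_{1})-\bar u(\eta_{2})-2v_{0}^{-1}]+o(1)$ is left as an obstacle; with source terms of size $C/r_{1}$ the drift in $R_{\pm}$ is $O(t)$, not $o(1)$, so the leading coefficient is not pinned down by your argument as written.

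The paper resolves both issues by two changes of viewpoint. It follows the \emph{particle paths} $C_{0}^{\eta_{1}},C_{0}^{\eta_{2}}$ (speed $u$) rather than $\pm$-characteristics; the second equation of \eqref{u v eq3} is then an exact conservation law, $\frac{d}{dt}\int_{r_{0}(t;\eta_{1})}^{r_{0}(t;\eta_{2})} r\,v\,dr=0$, so $v$ must blow up the moment the $C_{0}$-tube collapses, with no appeal to characteristic coordinates or linear degeneracy. To show the tube collapses inside $\Lambda$, the paper proves the invariant-region bound $\tilde R_{\pm}:=\partial_{\pm}v-\alpha\mathcal{F}/r>0$ (Lemma~\ref{lem u c prop}). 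This is the real technical core: it requires the commutator relation and the full second-order decomposition of $\partial_{+}\partial_{-}v$, $\partial_{-}\partial_{+}v$ (Propositions~\ref{prop1}--\ref{prop2}), followed by a delicate sign analysis of the coefficients $\tilde a_{12},\tilde a_{22},\tilde a_{13},\tilde a_{23}$ in the resulting Riccati-type system. From $\tilde R_{\pm}>0$ one gets, via \eqref{u c pm}, the clean one-sided bounds $\partial_{0}u>\partial_{0}(v^{-1})$ and $\partial_{0}u<-\partial_{0}(v^{-1})$, hence $u>\bar u(\eta_{1})-v_{0}^{-1}$ on $C_{0}^{\eta_{1}}$ and $u<\bar u(\eta_{2})+v_{0}^{-1}$ on $C_{0}^{\eta_{2}}$, which together with (A3) force the collapse. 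In short, the missing ingredient in your plan is a derivative-level invariant region for $\partial_{\pm}v$; once that is in hand, the $C_{0}$-characteristics plus mass conservation replace your cross-family bootstrap entirely.
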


\subsection{Characteristic decompositions}
In this subsection, we derive a group of suitable characteristic decompositions for system \eqref{u v eq3}. One can easily get the eigenvalues of system \eqref{u v eq3}
\begin{equation} \label{lambda pm}
	\begin{aligned}
		\lambda_+=u+v^{-1}, \ \lambda_-=u-v^{-1},
	\end{aligned}
\end{equation}
with the corresponding left eigenvectors $l_{\pm} = (\pm v,v^{-1})$. Multiplying \eqref{u v eq3} by $l_{\pm}$ from the left-side yields the characteristic equations
\begin{equation} \label{c rho pm}
	\begin{aligned}
		\begin{split}
			\left\lbrace
			\begin{array}{lr}	
				v \partial_{+} u + v^{-1} \partial_{+} v  = -r^{-1} (u + v^{-1}\mathcal{F}(u,v)) &\\
				- v \partial_{-} u + v^{-1} \partial_{-} v = -r^{-1} (u - v^{-1}\mathcal{F}(u,v)),&
			\end{array}	
			\right.
		\end{split}	
	\end{aligned}
\end{equation}
where 
\begin{equation} \label{def of partial pm}
	\partial_{\pm} \triangleq \partial_{t} + (u \pm v^{-1}) \partial_{r} .
\end{equation}
The equations \eqref{c rho pm} can be rewritten as
\begin{equation} \label{u c pm}
	\begin{aligned}
		\begin{split}
			\left\lbrace
			\begin{array}{lr}	
				\partial_{+} u = -v^{-2}\partial_{+} v - r^{-1}( uv^{-1}+ v^{-2}\mathcal{F}(u,v)) \\
				\partial_{-} u = v^{-2}\partial_{-} v + r^{-1}( uv^{-1}- v^{-2}\mathcal{F}(u,v)).
			\end{array}	
			\right.
		\end{split}	
	\end{aligned}
\end{equation}
Clearly, one has
\begin{equation} \label{partial rt pm}
	\begin{aligned}
		\partial_{r}=\frac{v(\partial_{+}+\partial_{-})}{2}, \ \partial_{t}=\frac{v\left[ (u+v^{-1})\partial_{-}-(u-v^{-1}) \partial_{+}\right] }{2}.
	\end{aligned}
\end{equation}
In this paper, we do not estimate the derivatives of the Riemann variants of system \eqref{u v eq3}. Instead, we will estimate $\partial_{\pm}v$. In what follows, we shall first derive some characteristic equations for $\partial_{\pm}v$. To this end, we need to introduce the following commutator relation.

\begin{prop} \label{prop1}
	One has the following commutator relation
	\begin{equation} \label{commutator prop}
		\begin{aligned}
			\partial_{+} \partial_{-} - \partial_{-} \partial_{+} = - \frac{u}{r} (\partial_{+} - \partial_{-}).
		\end{aligned}
	\end{equation}
	
\end{prop}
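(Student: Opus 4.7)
The plan is to verify the identity as an equality of first-order differential operators by applying both sides to an arbitrary smooth test function and comparing. Since $\partial_\pm = \partial_t + (u\pm v^{-1})\partial_r$ are vector fields in $(t,r)$ sharing the same $\partial_t$-coefficient $1$, the standard Lie bracket computation for vector fields immediately shows that all pure second-order derivatives $f_{tt}$, $f_{tr}$, $f_{rr}$ cancel by equality of mixed partials, as does the $\partial_t$-component of the commutator (since $\partial_\pm(1) = 0$). What remains is the $\partial_r$-component
\begin{equation*}
[\partial_+,\partial_-] = \bigl(\partial_+(u-v^{-1}) - \partial_-(u+v^{-1})\bigr)\,\partial_r = \bigl((\partial_+ - \partial_-)u - (\partial_+ + \partial_-)v^{-1}\bigr)\,\partial_r.
\end{equation*}

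Next I would use the characteristic equations \eqref{u c pm} to eliminate the derivatives of $u$. Subtracting the second line of \eqref{u c pm} from the first, the two copies of the nonlinear source $r^{-1}v^{-2}\mathcal{F}(u,v)$ appear with opposite signs and cancel, producing
\begin{equation*}
(\partial_+ - \partial_-)u = -v^{-2}(\partial_+ + \partial_-)v - \frac{2u}{rv}.
\end{equation*}
On the other hand, from $\partial_\pm v^{-1} = -v^{-2}\partial_\pm v$ one gets $(\partial_+ + \partial_-)v^{-1} = -v^{-2}(\partial_+ + \partial_-)v$. Inserting both expressions into the commutator formula above, the two $(\partial_+ + \partial_-)v$ contributions cancel exactly, leaving
\begin{equation*}
[\partial_+,\partial_-] = -\frac{2u}{rv}\,\partial_r.
\end{equation*}

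Finally, from the definition \eqref{def of partial pm} one has $\partial_+ - \partial_- = 2v^{-1}\partial_r$, so $-\tfrac{u}{r}(\partial_+ - \partial_-) = -\tfrac{2u}{rv}\partial_r$, matching the expression just obtained and completing the verification. I do not anticipate a genuine obstacle here: the argument is bookkeeping once one identifies the two key cancellations, namely (i) the $\mathcal{F}(u,v)$ source terms drop out of $(\partial_+-\partial_-)u$ because they enter \eqref{u c pm} with opposite signs, and (ii) the $(\partial_++\partial_-)v$ terms cancel between $(\partial_+-\partial_-)u$ and $(\partial_++\partial_-)v^{-1}$. The only input beyond pure algebra is the characteristic form \eqref{u c pm}, which was already established in the derivation of the system.
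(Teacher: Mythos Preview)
Your proof is correct and follows essentially the same route as the paper: both reduce the Lie bracket of the two vector fields to $(\partial_{+}\lambda_{-}-\partial_{-}\lambda_{+})\partial_{r}$, evaluate this using the characteristic equations \eqref{u c pm} to obtain $-\tfrac{2u}{rv}\partial_{r}$, and then rewrite $\partial_{r}$ via $\partial_{+}-\partial_{-}=2v^{-1}\partial_{r}$. Your presentation makes the two cancellations (of the $\mathcal{F}$ source terms and of the $(\partial_{+}+\partial_{-})v$ terms) more explicit, but the argument is the same.
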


\begin{proof}
	By \eqref{def of partial pm}, \eqref{u c pm} and \eqref{partial rt pm}, we have
	\begin{equation*} 
		\begin{aligned}
			\partial_{+} \partial_{-} - \partial_{-} \partial_{+} & = ( \partial_{t} + \lambda_+ \partial_{r} ) ( \partial_{t} + \lambda_- \partial_{r} ) - ( \partial_{t} + \lambda_- \partial_{r} ) ( \partial_{t} + \lambda_+ \partial_{r} ) \\
			& =  \partial_{t} (\lambda_- \partial_{r}) + \lambda_+ \partial_{t} \partial_{r} + \lambda_+ \partial_{r} (\lambda_- \partial_{r}) \\
			& \ \ \ \ - \partial_{t} (\lambda_+ \partial_{r}) -\lambda_- \partial_{t} \partial_{r} - \lambda_- \partial_{r} (\lambda_+ \partial_{r}) \\
			& = ( \partial_{t}\lambda_- ) \partial_{r} + \lambda_+ ( \partial_{r}\lambda_- ) \partial_{r} - ( \partial_{t}\lambda_+ ) \partial_{r} - \lambda_- ( \partial_{r}\lambda_+ ) \partial_{r} \\
			& = ( \partial_{+}\lambda_- - \partial_{-}\lambda_+) \partial_{r} \\
			& = \frac{-2uv^{-1}}{r} \cdot \frac{v(\partial_{+} - \partial_{-})}{2} \\
			&= - \frac{u}{r} (\partial_{+} - \partial_{-}).
		\end{aligned}
	\end{equation*}
	This completes the proof of Proposition \ref{prop1}.	
\end{proof}

The next proposition states the characteristic decomposition for the variable $v$ in \eqref{u v eq3}.

\begin{prop} \label{prop2}
	One has the following characteristic decomposition about the solution $v$
	\begin{equation} \label{character decompose prop}
		\begin{aligned}
			\begin{split}
				\partial_{+} \partial_{-} v &= 2 v^{-1} \partial_{+}v \cdot \partial_{-} v + \frac{1}{2r} (2u - v^{-2} \partial_{u} \mathcal{F}) \left(  \partial_{+} v + \partial_{-} v \right)  \\
				& \ \ \ \  + \frac{1}{2r} \left( -u+v^{-1}-2v^{-1} \mathcal{F} + \partial_{v} \mathcal{F} \right)  \left(  \partial_{+} v - \partial_{-} v \right) \\
				& \ \ \ \ + \frac{1}{r^{2}}\left( 2u^2 v - uv^{-1}\partial_{u} \mathcal{F} \right) ,\\
				\partial_{-} \partial_{+} v & = 2 v^{-1} \partial_{+}v \cdot \partial_{-} v + \frac{1}{2r} (2u - v^{-2} \partial_{u} \mathcal{F}) \left(  \partial_{+} v + \partial_{-} v \right)  \\
				& \ \ \ \  + \frac{1}{2r} \left( u+v^{-1}-2v^{-1} \mathcal{F} + \partial_{v} \mathcal{F} \right)  \left(  \partial_{+} v - \partial_{-} v \right) \\
				& \ \ \ \ + \frac{1}{r^{2}}\left( 2u^2 v - uv^{-1}\partial_{u} \mathcal{F} \right)  .				 
			\end{split}	
		\end{aligned}
	\end{equation}
	
\end{prop}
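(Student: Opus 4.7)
The plan is to derive the first identity directly, then obtain the second as a consequence of Proposition \ref{prop1}. Applying \eqref{commutator prop} to $v$ gives $\partial_{-}\partial_{+} v - \partial_{+}\partial_{-} v = \frac{u}{r}(\partial_{+} v - \partial_{-} v)$, which is exactly the difference between the two formulas in the statement: the nonlinear $\partial_{+} v \cdot \partial_{-} v$ term and the coefficient of $(\partial_{+} v + \partial_{-} v)$ are identical, while the coefficient of $(\partial_{+} v - \partial_{-} v)$ differs precisely by $(2u)/(2r) = u/r$. Hence it suffices to prove the first identity; the second follows by adding the commutator correction.

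For the first identity, I start from $\partial_{-} v = v^{2}\partial_{-} u - r^{-1}(uv - \mathcal{F})$, obtained by multiplying the second equation of \eqref{c rho pm} by $v$. Applying $\partial_{+}$ yields
$$\partial_{+}\partial_{-} v = 2v\,(\partial_{+} v)(\partial_{-} u) + v^{2}\partial_{+}\partial_{-} u - \partial_{+}\bigl[r^{-1}(uv - \mathcal{F})\bigr].$$
In the first term, substituting $\partial_{-} u$ from the second line of \eqref{u c pm} produces the nonlinear $2v^{-1}(\partial_{+} v)(\partial_{-} v)$ together with a linear-in-$\partial_{+} v$ contribution. In the third term, I expand via the chain rule with $\partial_{+} r = u + v^{-1}$ and $\partial_{+}\mathcal{F} = \partial_{u}\mathcal{F}\,\partial_{+} u + \partial_{v}\mathcal{F}\,\partial_{+} v$, after which the residual $\partial_{+} u$ is eliminated using the first line of \eqref{u c pm}. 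The middle term $v^{2}\partial_{+}\partial_{-} u$ is handled by Proposition \ref{prop1}, which gives $\partial_{+}\partial_{-} u = \partial_{-}\partial_{+} u - (u/r)(\partial_{+} u - \partial_{-} u)$; the quantity $\partial_{-}\partial_{+} u$ is then computed by directly applying $\partial_{-}$ to the first line of \eqref{u c pm}. This last step reintroduces $v^{-2}\partial_{-}\partial_{+} v$, which I convert to $v^{-2}\partial_{+}\partial_{-} v + v^{-2}(u/r)(\partial_{+} v - \partial_{-} v)$ by another application of \eqref{commutator prop}. Moving the resulting $\partial_{+}\partial_{-} v$ to the left produces $2\partial_{+}\partial_{-} v$, after which dividing by $2$ isolates the desired expression.

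The main obstacle will be the algebraic bookkeeping in the final collection step. Once all $\partial_{\pm} u$ have been eliminated via \eqref{u c pm}, one is left with a linear combination in $\partial_{+} v$ and $\partial_{-} v$ whose coefficients depend on $u,\,v,\,r,\,\mathcal{F},\,\partial_{u}\mathcal{F},\,\partial_{v}\mathcal{F}$; the task is to verify that these reorganize into $\frac{1}{2r}(2u - v^{-2}\partial_{u}\mathcal{F})(\partial_{+} v + \partial_{-} v)$ and $\frac{1}{2r}(-u + v^{-1} - 2v^{-1}\mathcal{F} + \partial_{v}\mathcal{F})(\partial_{+} v - \partial_{-} v)$, while the purely algebraic remainder arising from the $\partial_{\pm}(r^{-1})$ terms reassembles into $\frac{1}{r^{2}}(2u^{2}v - uv^{-1}\partial_{u}\mathcal{F})$. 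No further ideas are needed beyond careful expansion and cancellation, and the second identity then follows immediately from the commutator observation of the first paragraph.
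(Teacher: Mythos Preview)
Your proposal is correct and uses the same two ingredients as the paper --- the commutator relation \eqref{commutator prop} applied to both $u$ and $v$, together with the characteristic equations \eqref{u c pm} --- but the organization differs. The paper proceeds symmetrically: applying \eqref{commutator prop} to $v$ gives the \emph{difference} $\partial_{+}\partial_{-}v-\partial_{-}\partial_{+}v$, while applying \eqref{commutator prop} to $u$ and substituting \eqref{u c pm} on both sides yields, after expanding three auxiliary terms $I_1,I_2,I_3$, an expression for the \emph{sum} $\partial_{+}\partial_{-}v+\partial_{-}\partial_{+}v$; the two identities in \eqref{character decompose prop} then drop out by solving this $2\times2$ linear system. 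Your route instead targets $\partial_{+}\partial_{-}v$ alone, but the path through $\partial_{+}\partial_{-}u\to\partial_{-}\partial_{+}u\to\partial_{-}\partial_{+}v\to\partial_{+}\partial_{-}v$ invokes the commutator twice inside the derivation and then a third time to obtain the second formula. Both arguments are valid; the paper's symmetric version avoids the round trip and makes the bookkeeping somewhat lighter, since the sum and difference are each computed once rather than the commutator correction being threaded through the calculation repeatedly.
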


\begin{proof}
	Applying the commutator relation \eqref{commutator prop} for variables $v$ and $u$ respectively, we have
	\begin{equation}\label{partial 2 - }
		\begin{aligned}
			\partial_{+} \partial_{-} v - \partial_{-} \partial_{+} v =- \frac{u}{r} \left(  \partial_{+}v - \partial_{-}v\right)   ,
		\end{aligned}
	\end{equation}
	and
	\begin{equation} \label{partial 2 u -}
		\begin{aligned}
			\partial_{+} \partial_{-} u - \partial_{-} \partial_{+} u = - \frac{u}{r} (\partial_{+}u - \partial_{-}u).
		\end{aligned}
	\end{equation}
	Inserting \eqref{u c pm} into the left side of \eqref{partial 2 u -} yields that
	\begin{equation}\label{I123}
		\begin{aligned}
			& \ \ \ \ \partial_{+} [v^{-2} \partial_{-} v + r^{-1} (uv^{-1}-v^{-2} \mathcal{F})]  - \partial_{-} [-v^{-2} \partial_{+} v + r^{-1} (uv^{-1}+v^{-2} \mathcal{F})]  \\
			& = [\partial_{+} (v^{-2}) \partial_{-} v + \partial_{-} (v^{-2}) \partial_{+} v] + v^{-2} (\partial_{+} \partial_{-} v + \partial_{-} \partial_{+} v) \\
			& \ \ \ \  + r^{-1} [ \partial_{+} (uv^{-1}) + \partial_{-} (uv^{-1}) ]  + r^{-1} [ -\partial_{+} (v^{-2}\mathcal{F}) + \partial_{-} (v^{-2}\mathcal{F}) ] \\
			& \ \ \ \  + (uv^{-1} - v^{-2} \mathcal{F})\partial_{+} (r^{-1}) + (uv^{-1} + v^{-2} \mathcal{F})\partial_{-} (r^{-1})  \\
			& \triangleq  v^{-2} [ -4 v^{-1}\partial_{+} v \cdot \partial_{-} v  + (\partial_{+} \partial_{-} v + \partial_{-} \partial_{+} v) ] + I_{1} + I_{2} + I_{3},
		\end{aligned}
	\end{equation}
	where 
	\begin{equation} \label{I1}
		\begin{aligned}
			I_{1} & \triangleq r^{-1} [ \partial_{+} (uv^{-1}) + \partial_{-} (uv^{-1}) ] \\
			& = r^{-1} [ v^{-1} (\partial_{+}u + \partial_{-}u) + u ( \partial_{+} (v^{-1}) + \partial_{-} (v^{-1}) ) ] \\
			& = r^{-1} \{v^{-1} [- v^{-2} ( \partial_{+}v-\partial_{-}v) - 2r^{-1}v^{-2}\mathcal{F}]   - u v^{-2}( \partial_{+} v + \partial_{-} v )\}     \\
			& = v^{-2} [-r^{-1} v^{-1} ( \partial_{+}v-\partial_{-}v) - r^{-1} u ( \partial_{+} v + \partial_{-} v ) - 2r^{-2}v^{-1}\mathcal{F}],
		\end{aligned}
	\end{equation}
	\begin{equation}\label{I2}
		\begin{aligned}
			I_{2} & \triangleq  r^{-1} [ -\partial_{+} (v^{-2}\mathcal{F}) + \partial_{-} (v^{-2}\mathcal{F}) ] \\
			& =  r^{-1} [ \mathcal{F} ( -\partial_{+} (v^{-2}) + \partial_{-} (v^{-2}) ) +v^{-2} (-\partial_{+} \mathcal{F} + \partial_{-} \mathcal{F}) ] \\
			& =  r^{-1} \{2v^{-3} \mathcal{F} ( \partial_{+} v - \partial_{-} v ) +v^{-2} [\partial_{u} \mathcal{F} (-\partial_{+} u + \partial_{-} u) + \partial_{v} \mathcal{F} (-\partial_{+} v + \partial_{-} v)]\}  \\
			& = r^{-1} \{2v^{-3} \mathcal{F} ( \partial_{+} v - \partial_{-} v ) +v^{-2} [ \partial_{u} \mathcal{F} (v^{-2} (\partial_{+} v + \partial_{-}v) + 2r^{-1}uv^{-1}) \\
			& \ \ \ \  - \partial_{v} \mathcal{F} (\partial_{+} v - \partial_{-} v) ]\}  \\
			& = v^{-2} [  r^{-1}v^{-2}\partial_{u} \mathcal{F} (\partial_{+} v + \partial_{-}v) + r^{-1} ( 2v^{-1}\mathcal{F} - \partial_{v} \mathcal{F} ) (\partial_{+} v - \partial_{-}v) \\
			 & \ \ \ \ + 2r^{-1}uv^{-1}\partial_{u} \mathcal{F}],
		\end{aligned}
	\end{equation}
	and
	\begin{equation} \label{I3}
		\begin{aligned}
			I_{3} & \triangleq  (uv^{-1} - v^{-2} \mathcal{F})\partial_{+} (r^{-1}) + (uv^{-1} + v^{-2} \mathcal{F})\partial_{-} (r^{-1})\\
			& = -r^{-2} (u+v^{-1}) (uv^{-1} - v^{-2} \mathcal{F}) -r^{-2} (u-v^{-1}) (uv^{-1} + v^{-2} \mathcal{F}) \\
			& =  v^{-2} \cdot r^{-2} ( -2uv + 2v^{-1}\mathcal{F}  ) .
		\end{aligned}
	\end{equation}		
	Substituting \eqref{I1}, \eqref{I2} and \eqref{I3} into \eqref{I123}, one has
	\begin{equation}
		\begin{aligned}
			& \ \ \ \ \partial_{+} [v^{-2} \partial_{-} v + r^{-1} (uv^{-1}-v^{-2} \mathcal{F})]  - \partial_{-} [-v^{-2} \partial_{+} v + r^{-1} (uv^{-1}+v^{-2} \mathcal{F})]  \\
			& =  v^{-2} [ -4 v^{-1}\partial_{+} v \cdot \partial_{-} v  + (\partial_{+} \partial_{-} v + \partial_{-} \partial_{+} v) ]\\
			& \ \ \ \ + v^{-2} r^{-1} (-u + v^{-2}\partial_{u} \mathcal{F}) ( \partial_{+} v + \partial_{-} v )  \\
			& \ \ \ \ + v^{-2}  r^{-1} ( -v^{-1} + 2v^{-1}\mathcal{F} - \partial_{v} \mathcal{F} ) (\partial_{+} v - \partial_{-}v) \\
			& \ \ \ \ + v^{-2} r^{-2} (-2uv + 2r^{-1}uv^{-1}\partial_{u} \mathcal{F}) .
		\end{aligned}
	\end{equation}
	Hence, the left-side of \eqref{partial 2 u -} becomes	
	\begin{equation*} 
		\begin{aligned}
			& \ \ \ \ v^{-2} (\partial_{+} \partial_{-} v + \partial_{-} \partial_{+} v) -4 v^{-3}\partial_{+} v \cdot \partial_{-} v +   \frac{v^{-2} (-u + v^{-2}\partial_{u} \mathcal{F})}{r}  ( \partial_{+} v + \partial_{-} v )  \\
			& + \frac{v^{-2} ( -v^{-1} + 2v^{-1}\mathcal{F} - \partial_{v} \mathcal{F} ) }{r}  ( \partial_{+} v - \partial_{-} v ) + \frac{v^{-2} (-2uv + 2r^{-1}uv^{-1}\partial_{u} \mathcal{F}) }{r^2} .
		\end{aligned}
	\end{equation*}		
	And the right side of \eqref{partial 2 u -} is	
	\begin{equation*}
		\begin{aligned}
			- \frac{u}{r} \left[  -v^{-2} (\partial_{+} v + \partial_{-} v ) -  \frac{2uv^{-1}}{r}   \right]  =  \frac{ uv^{-2} }{r} (\partial_{+}v + \partial_{-}v) + \frac{2u^2 v^{-1}}{r^2}  .
		\end{aligned}
	\end{equation*}	
	Then the equation \eqref{partial 2 u -} gives
	\begin{equation} \label{partial 2 + }
		\begin{aligned}
			& \ \ \ \ \partial_{+} \partial_{-} v + \partial_{-} \partial_{+} v \\
			& = 4v^{-1} \partial_{+}v \cdot \partial_{-}v + \frac{2u - v^{-2} \partial_{u} \mathcal{F}}{r} (\partial_{+}v + \partial_{-}v) \\
			& \ \ \ \ + \frac{v^{-1}-2v^{-1} \mathcal{F} + \partial_{v} \mathcal{F}}{r} (\partial_{+}v - \partial_{-}v)   + \frac{4u^2 v - 2uv^{-1}\partial_{u} \mathcal{F} }{r^{2}} .
		\end{aligned}
	\end{equation}	
	Finally, by \eqref{partial 2 - } and \eqref{partial 2 + }, we obtain the equation \eqref{character decompose prop}. The proof of this proposition is completed.
\end{proof}

By \eqref{character decompose prop}, we denote
\begin{equation} \label{partial partial v}
	\begin{aligned}
		\begin{split}
			\left\lbrace
			\begin{array}{lr}	
				\partial_{+} \partial_{-}v =2 v^{-1} \partial_{+}v \cdot \partial_{-} v + \frac{a_{11}}{2r} \partial_{-}v+ \frac{a_{12}}{2r}\partial_{+}v + \frac{a_{13}}{r^2}, \\
				\partial_{-} \partial_{+}v = 2 v^{-1} \partial_{+}v \cdot \partial_{-} v + \frac{a_{21}}{2r}\partial_{+}v+ \frac{a_{22}}{2r}\partial_{-}v + \frac{a_{23}}{r^2},
			\end{array}	
			\right.
		\end{split}	
	\end{aligned}
\end{equation}
where
\begin{equation} \label{coefficient a} 
	\begin{aligned}
		a_{11} & =3u-v^{-1}+2v^{-1} \mathcal{F}- v^{-2} \partial_{u} \mathcal{F} - \partial_{v} \mathcal{F}, \\
		a_{12} & =u+v^{-1}-2v^{-1} \mathcal{F}- v^{-2} \partial_{u} \mathcal{F} + \partial_{v} \mathcal{F}, \\
		a_{21} & =3u+v^{-1}-2v^{-1} \mathcal{F}- v^{-2} \partial_{u} \mathcal{F} + \partial_{v} \mathcal{F}, \\
		a_{22} & =u - v^{-1} + 2v^{-1} \mathcal{F}- v^{-2} \partial_{u} \mathcal{F} - \partial_{v} \mathcal{F}, \\
		a_{13}&  = a_{23} = uv(2u-v^{-2}\partial_{u} \mathcal{F}).
	\end{aligned}
\end{equation}	
Furthermore, we define
\begin{equation} 
	\begin{aligned}
		\tilde{R}_{+} \triangleq  \partial_{+} v - \frac{\alpha\mathcal{F}}{r},\ \ \tilde{R}_{-} \triangleq \partial_{-} v - \frac{\alpha\mathcal{F}}{r},
	\end{aligned}
\end{equation}
where the constants $\alpha \geq \beta \geq 1$. Then, by \eqref{partial partial v}, we have
\begin{equation} \label{eq of partial R}
	\begin{aligned}
		\begin{split}
			\left\lbrace
			\begin{array}{lr}	
				\partial_{+} \tilde{R}_{-} = 2 v^{-1}\tilde{R}_{+}\tilde{R}_{-} + \frac{\tilde{a}_{11}}{2r} \tilde{R}_{-}+ \frac{\tilde{a}_{12}}{2r}\tilde{R}_{+} + \frac{\tilde{a}_{13}}{r^2}, \\
				\partial_{-} \tilde{R}_{+} = 2 v^{-1}\tilde{R}_{+}\tilde{R}_{-} + \frac{\tilde{a}_{21}}{2r}\tilde{R}_{+}+ \frac{\tilde{a}_{22}}{2r}\tilde{R}_{-} + \frac{\tilde{a}_{23}}{r^2}, 
			\end{array}	
			\right.
		\end{split}	
	\end{aligned}
\end{equation}
where 
\begin{equation} \label{coefficient tilde a} 
	\begin{aligned}
		\tilde{a}_{12} & = 2\alpha ( 2v^{-1} \mathcal{F} + v^{-2}  \partial_{u} \mathcal{F} - \partial_{v} \mathcal{F})  + {a}_{12}, \\
		\tilde{a}_{22} & = 2\alpha (2v^{-1} \mathcal{F} - v^{-2}  \partial_{u} \mathcal{F} - \partial_{v} \mathcal{F}) + {a}_{22},\\
		\tilde{a}_{13} & = \alpha^2 \mathcal{F} \cdot ( 2v^{-1} \mathcal{F} + v^{-2}  \partial_{u} \mathcal{F} - \partial_{v} \mathcal{F} )  + \alpha [(3u + 2v^{-1}) \mathcal{F} + uv^{-1} \partial_{u} \mathcal{F} ] +a_{13}, \\
		\tilde{a}_{23} & =\alpha^2 \mathcal{F} \cdot ( 2v^{-1} \mathcal{F} - v^{-2}  \partial_{u} \mathcal{F} - \partial_{v} \mathcal{F} )  + \alpha [(3u -v^{-1}) \mathcal{F} - uv^{-1} \partial_{u} \mathcal{F} ] +a_{23}.
	\end{aligned}
\end{equation}	

The aim of introducing the variables $ \tilde{R}_{\pm} $ is to establish an `invariant region' for $\left(\partial_{+} v, \partial_{-} v \right) $. We shall use \eqref{eq of partial R} and \eqref{coefficient tilde a} and the continuous argument to prove $ \tilde{R}_{\pm} > 0 $ for $t>0$, provided the initial data satisfy the assumptions (A1)-(A2). In fact, to derive \eqref{eq of partial R} and \eqref{coefficient tilde a}, one can start from \eqref{u c pm} to get
\begin{equation*} 
	\begin{aligned}
		\partial_{+} \left(  \frac{ \mathcal{F}}{r}  \right) & = r^{-1} \partial_{+} \mathcal{F} + \mathcal{F} \partial_{+} (r^{-1}) \\
		& =  r^{-1} ( \partial_{u} \mathcal{F} \cdot \partial_{+} u + \partial_{v} \mathcal{F} \cdot \partial_{+} v ) - r^{-2} (u+v^{-1})\mathcal{F} \\
		& = r^{-1} [ \partial_{u} \mathcal{F} ( -v^{-2} \partial_{+} v - r^{-1} ( uv^{-1} + v^{-2} \mathcal{F} ) ) +  \partial_{v} \mathcal{F} \cdot \partial_{+} v] \\
		& \ \ \ \ - r^{-2} (u+v^{-1})\mathcal{F} \\
		& =  \frac{ -v^{-2} \partial_{u} \mathcal{F} + \partial_{v} \mathcal{F} }{r} \partial_{+} v + \frac{ - v^{-1} \partial_{u} \mathcal{F}  (u+v^{-1}\mathcal{F}  ) - (u+v^{-1})\mathcal{F} }{r^{2}} ,
	\end{aligned}
\end{equation*}	
and
\begin{equation*} 
	\begin{aligned}
		\partial_{-} \left(  \frac{ \mathcal{F}}{r}  \right) & = r^{-1} \partial_{-} \mathcal{F} + \mathcal{F} \partial_{-} (r^{-1}) \\
		& =  r^{-1} ( \partial_{u} \mathcal{F} \cdot \partial_{-} u + \partial_{v} \mathcal{F} \cdot \partial_{-} v ) - r^{-2} (u-v^{-1})\mathcal{F} \\
		& = r^{-1} [ \partial_{u} \mathcal{F} ( v^{-2} \partial_{-} v + r^{-1} ( uv^{-1} - v^{-2} \mathcal{F} ) ) +  \partial_{v} \mathcal{F} \cdot \partial_{-} v] \\
		& \ \ \ \ - r^{-2} (u-v^{-1})\mathcal{F} \\
		& =  \frac{ v^{-2} \partial_{u} \mathcal{F} + \partial_{v} \mathcal{F} }{r} \partial_{-} v + \frac{  v^{-1} \partial_{u} \mathcal{F}  (u-v^{-1}\mathcal{F}  ) - (u-v^{-1})\mathcal{F} }{r^{2}} .
	\end{aligned}
\end{equation*}	
Then one can obtain
\begin{equation} \label{partial R 1}
	\begin{aligned}
		\partial_{+} \tilde{R}_{-} 
		& =  \partial_{+} \partial_{-} v - \alpha \partial_{+} \left( \frac{ \mathcal{F}}{r} \right) \\
		& = 2v^{-1} \left( \tilde{R}_{+ } + \frac{\alpha \mathcal{F}}{r} \right) \cdot \left( \tilde{R}_{-} + \frac{\alpha \mathcal{F}}{r} \right)  + \frac{a_{11}}{2r} \left(\tilde{R}_{-} + \frac{\alpha \mathcal{F}}{r} \right)  \\
		& \ \ \ \  + \frac{a_{12}}{2r} \left(\tilde{R}_{+ } + \frac{\alpha \mathcal{F}}{r} \right) +\frac{a_{13}}{r^{2}} + \frac{\alpha( v^{-2} \partial_{u} \mathcal{F} - \partial_{v} \mathcal{F}  )}{r} \left( \tilde{R}_{+ } - \frac{\alpha \mathcal{F}}{r} \right) \\
		& \ \ \ \ + \frac{\alpha[v^{-1} \partial_{u} \mathcal{F}  (u+v^{-1}\mathcal{F}  ) + (u+v^{-1})\mathcal{F} ] }{r^2} \\
		& = 2v^{-1}\tilde{R}_{+ } \tilde{R}_{-} + \left( \frac{\alpha \cdot 2v^{-1} \mathcal{F} }{r} +\frac{a_{11}}{2r} \right) \tilde{R}_{-} \\
		& \ \ \ \  +\left( \frac{\alpha ( 2v^{-1}\mathcal{F} + v^{-2}\partial_{u}\mathcal{F} - \partial_{v}\mathcal{F})}{r} + \frac{a_{12}}{2r} \right)  \tilde{R}_{+ }  + \frac{\alpha^{2} \cdot 2v^{-1} \mathcal{F}^{2} }{r} \\
		& \ \ \ \ + \frac{\alpha \mathcal{F} (2u - v^{-2} \partial_{u}\mathcal{F} ) }{r} +\frac{a_{13}}{r^{2}}   + \frac{\alpha^{2} \cdot ( v^{-2} \partial_{u} \mathcal{F} - \partial_{v} \mathcal{F}  )  \mathcal{F} }{r} \\
		& \ \ \ \ + \frac{\alpha[v^{-1} \partial_{u} \mathcal{F}  (u+v^{-1}\mathcal{F}  ) + (u+v^{-1})\mathcal{F} ] }{r^2},
	\end{aligned}
\end{equation}	
and
\begin{equation} \label{partial R 2}
	\begin{aligned}
		\partial_{-} \tilde{R}_{+} & =  \partial_{-} \partial_{+} v - \alpha \partial_{-} \left( \frac{ \mathcal{F}}{r} \right) \\
		& = 2v^{-1} \left( \tilde{R}_{+ } + \frac{\alpha \mathcal{F}}{r} \right) \cdot \left( \tilde{R}_{-} + \frac{\alpha \mathcal{F}}{r} \right)  + \frac{a_{21}}{2r} \left(\tilde{R}_{+} + \frac{\alpha \mathcal{F}}{r} \right)  \\
		& \ \ \ \  + \frac{a_{22}}{2r} \left(\tilde{R}_{- } + \frac{\alpha \mathcal{F}}{r} \right) +\frac{a_{23}}{r^{2}}  - \frac{\alpha( v^{-2} \partial_{u} \mathcal{F} + \partial_{v} \mathcal{F}  )}{r} \left( \tilde{R}_{+ } - \frac{\alpha \mathcal{F}}{r} \right) \\
		& \ \ \ \ - \frac{\alpha[v^{-1} \partial_{u} \mathcal{F}  (u-v^{-1}\mathcal{F}  ) - (u-v^{-1})\mathcal{F} ] }{r^2} \\
		& = 2v^{-1}\tilde{R}_{+ } \tilde{R}_{-} + \left( \frac{\alpha \cdot 2v^{-1} \mathcal{F} }{r} +\frac{a_{21}}{2r} \right) \tilde{R}_{+} \\
		& \ \ \ \  +\left( \frac{\alpha ( 2v^{-1}\mathcal{F} - v^{-2}\partial_{u}\mathcal{F} - \partial_{v}\mathcal{F})}{r} + \frac{a_{22}}{2r} \right)  \tilde{R}_{- }  + \frac{\alpha^{2} \cdot 2v^{-1} \mathcal{F}^{2} }{r} \\
		& \ \ \ \ + \frac{\alpha \mathcal{F} (2u - v^{-2} \partial_{u}\mathcal{F} ) }{r} +\frac{a_{23}}{r^{2}}   - \frac{\alpha^{2} \cdot ( v^{-2} \partial_{u} \mathcal{F} + \partial_{v} \mathcal{F}  )  \mathcal{F} }{r}\\
		& \ \ \ \  - \frac{\alpha[v^{-1} \partial_{u} \mathcal{F}  (u-v^{-1}\mathcal{F}  ) - (u-v^{-1})\mathcal{F} ] }{r^2}.
	\end{aligned}
\end{equation}	
The equations \eqref{eq of partial R} with coefficients \eqref{coefficient tilde a} are exactly derived from \eqref{partial R 1} and \eqref{partial R 2}.

\subsection{A Priori Estimate}

According to the local existence and uniqueness of classical solution to Cauchy problem for first order quasilinear hyperbolic systems (see Chapter 1 in \cite{LiYu85}), for $C^1$ function $\bar{u}(r)$, there exists a positive constant $T > 0$ only depending on the $C^1$-norm of initial data $\bar{u}(r)$ and $v_0$, such that system \eqref{u v eq3} with initial data \eqref{ID} admits a unique $C^1$ solution on the domain 
\begin{equation} \label{def of Lambda}
	\begin{aligned}
		\Lambda (T) = \{ (r,t)|r_{+}(t,r_{1}) \leq r \leq r_{-}(t,r_{2}), \ 0\leq t \leq T  \},
	\end{aligned}
\end{equation}	
where $r=r_{+}(t,r_{1})$ and $r = r_{-}(t,r_{2})$ represent the $C_{+}$ and $C_{-}$ characteristic curves passing through the points $(r_{1},0)$ and $(r_{2},0)$, respectively. See Fig. \ref{fig1}. 

\unitlength 1mm 
\linethickness{0.4pt}
\ifx\plotpoint\undefined\newsavebox{\plotpoint}\fi 
\begin{picture}(102.86,67.3)(-15,0)
	\label{fig1}
	\put(8.11,19.05){\vector(1,0){85}}
	\put(12.61,16.05){\vector(0,1){35}}
	\multiput(13.04,39.98)(.99351,0){78}{{\rule{.4pt}{.4pt}}}
	\put(53.36,36.3){\line(0,1){0}}
	\put(10.08,50){\makebox(0,0)[cc]{$t$}}
	\put(10.08,16.97){\makebox(0,0)[cc]{$0$}}
	\put(89,16.44){\makebox(0,0)[cc]{$r$}}
	\put(20.86,16.44){\makebox(0,0)[cc]{$r_1$}}
	\qbezier(20.51,19.09)(34.82,37.3)(44.19,39.95)
	\qbezier(53.74,18.91)(75.75,37.74)(86.8,39.95)
	\qbezier(33.06,19.09)(38.36,27.93)(45.78,34.65)
	\qbezier(45.96,34.65)(43.22,29.79)(40.84,18.91)
	\put(10.25,40.13){\makebox(0,0)[cc]{$T$}}
	\put(35.36,28.28){\makebox(0,0)[cc]{$C_+$}}
	\put(25.46,32.7){\makebox(0,0)[cc]{$C_+$}}
	\multiput(36.7,19.02)(.088,0){3}{{\rule{.4pt}{.4pt}}}
	\multiput(36.88,19.02)(-.177,-.088){3}{{\rule{.4pt}{.4pt}}}
	\multiput(45.54,34.4)(-.033228,-.057817){14}{\line(0,-1){.057817}}
	\multiput(44.61,32.78)(-.033228,-.057817){14}{\line(0,-1){.057817}}
	\multiput(43.68,31.16)(-.033228,-.057817){14}{\line(0,-1){.057817}}
	\multiput(42.75,29.54)(-.033228,-.057817){14}{\line(0,-1){.057817}}
	\multiput(41.82,27.93)(-.033228,-.057817){14}{\line(0,-1){.057817}}
	\multiput(40.89,26.31)(-.033228,-.057817){14}{\line(0,-1){.057817}}
	\multiput(39.96,24.69)(-.033228,-.057817){14}{\line(0,-1){.057817}}
	\multiput(39.02,23.07)(-.033228,-.057817){14}{\line(0,-1){.057817}}
	\multiput(38.09,21.45)(-.033228,-.057817){14}{\line(0,-1){.057817}}
	\multiput(37.16,19.83)(-.033228,-.057817){14}{\line(0,-1){.057817}}
	\put(36.7,19.02){\line(0,1){0}}
	\put(47,36.42){\makebox(0,0)[cc]{$P$}}
	\put(47,26.16){\makebox(0,0)[cc]{$C_-$}}
	\put(38.5,21.74){\makebox(0,0)[cc]{$\Omega_P$}}
	\put(30.5,16.44){\makebox(0,0)[cc]{$P_+$}}
	\put(36.59,16.44){\makebox(0,0)[cc]{$P_0$}}
	\put(43,16.44){\makebox(0,0)[cc]{$P_-$}}
	\put(55.86,32){\makebox(0,0)[cc]{$\Lambda(T)$}}
	\put(72.83,27.4){\makebox(0,0)[cc]{$C_-$}}
	\put(53.21,16.44){\makebox(0,0)[cc]{$r_2$}}
	\put(21,7){Figure 3.2: Domains $\Lambda(T)$ and $\Omega_P$}
\end{picture}

Here we note that by \eqref{lambda pm} and \eqref{u c pm}, there holds
\begin{equation} 
	\begin{aligned}
		\partial_+ \lambda_- &= \partial_+ u - \partial_+ (v^{-1}) = -r^{-1}( uv^{-1}+ v^{-2}\mathcal{F}) < 0, \\
		\partial_- \lambda_+ &= \partial_- u + \partial_- (v^{-1}) = r^{-1}( uv^{-1}- v^{-2}\mathcal{F}) > 0.
	\end{aligned}
\end{equation}	
In what follows, we denote $ u_1 = \bar{u}(r_1) $ and $u_2 = \bar{u}(r_2)$. Under the assumptions (A1)-(A2), we establish the following key estimates.

\begin{lem} \label{lem u c prop}
	Suppose that system \eqref{u v eq3} with initial data \eqref{ID} admits a classical solution in $ \Lambda (T)$ for some $T>0$. Then under the assumptions (A1)-(A2), the solution satisfies 
	\begin{equation} \label{lem R}
		\begin{aligned}
			u_2 \leq u \leq u_1, \ \tilde{R}_{\pm} > 0 ,
		\end{aligned}
	\end{equation}	
	in $\Lambda (T)$, where $	\tilde{R}_{\pm} = \partial_{\pm} v - \frac{\alpha\mathcal{F}}{r}$ for $\alpha \geq 1$.
\end{lem}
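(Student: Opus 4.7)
The plan is to establish both assertions simultaneously by a continuity (bootstrap) argument on $[0,T]$. At $t = 0$ the inequality $u_2 \le \bar u(r) \le u_1$ is immediate: assumption (A2) together with $\bar u \ge \bar u(r_2) > 0$ and $\mathcal F_0 \ge 0$ forces $\bar u'(r) < 0$, so $\bar u$ is strictly decreasing on $[r_1,r_2]$. For $\tilde R_\pm$ at $t = 0$, the constancy $v(\cdot,0) \equiv v_0$ gives $v_r(\cdot,0) = 0$; then the second equation of \eqref{u v eq3} yields $v_t(r,0) = -v_0\bigl(\bar u'(r) + \bar u(r)/r\bigr)$ and therefore
\[ \tilde R_\pm(r,0) = -v_0\Bigl(\bar u'(r) + \frac{\bar u(r) + (\alpha/v_0)\mathcal F_0(r)}{r}\Bigr), \]
which is strictly positive whenever $\alpha/v_0 \le \beta$. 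Since $v \ge 1$, one may take $\alpha \in [\beta,\beta v_0]$, consistent with $\alpha \ge 1$.

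Now set
\[ t^* = \sup\{t \in [0,T] : u_2 \le u \le u_1 \text{ and } \tilde R_\pm > 0 \text{ throughout } \Lambda(t)\}, \]
so $t^* > 0$ by the previous step and continuity. Assume for contradiction that $t^* < T$. On $[0,t^*)$ the $u$-bounds are actually \emph{consequences} of the positivity of $\tilde R_\pm$: substituting $\partial_\pm v = \tilde R_\pm + \alpha\mathcal F/r$ into \eqref{u c pm} gives
\begin{align*}
\partial_+ u &= -v^{-2}\tilde R_+ - r^{-1}v^{-2}\bigl((\alpha+1)\mathcal F + uv\bigr) < 0, \\
\partial_- u &= \phantom{-}v^{-2}\tilde R_- + r^{-1}v^{-2}\bigl((\alpha-1)\mathcal F + uv\bigr) > 0,
\end{align*}
using $\alpha \ge 1$, $u \ge 0$ and $\mathcal F \ge 0$. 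Consequently $u$ is strictly decreasing along every $C_+$ characteristic and strictly increasing along every $C_-$; tracing either characteristic from $(r,t) \in \Lambda(t^*)$ back to its foot on $[r_1,r_2]$ yields $u_2 \le u(r,t) \le u_1$.

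The remaining and essential task is to propagate $\tilde R_\pm > 0$ across $t^*$. From \eqref{eq of partial R},
\[ \frac{d\tilde R_-}{dt}\bigg|_{C_+} = \Bigl(2v^{-1}\tilde R_+ + \frac{\tilde a_{11}}{2r}\Bigr)\tilde R_- + \frac{\tilde a_{12}}{2r}\tilde R_+ + \frac{\tilde a_{13}}{r^2}, \]
and an analogous ODE holds for $\tilde R_+$ along $C_-$. Provided the source terms $\tilde a_{12}\tilde R_+/(2r) + \tilde a_{13}/r^2$ and $\tilde a_{22}\tilde R_-/(2r) + \tilde a_{23}/r^2$ are non-negative, a Grönwall-type integration from $t = 0$ delivers a strictly positive lower bound for $\tilde R_\pm$ on all of $\Lambda(t^*)$, contradicting maximality of $t^*$. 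The principal difficulty is therefore the algebraic verification that the coefficients $\tilde a_{12},\tilde a_{13},\tilde a_{22},\tilde a_{23}$ from \eqref{coefficient tilde a} are non-negative on the region $\{0 \le u \le 1,\ v \ge 1\}$ once $\alpha$ is chosen large enough. To keep the signs manageable I would compute $\partial_u\mathcal F$ and $\partial_v\mathcal F$ by implicit differentiation of the quadratic identity \eqref{F eq}, rather than from the square-root form \eqref{def of F}, thereby expressing both derivatives as rational functions of $\mathcal F$, $u$ and $v$.
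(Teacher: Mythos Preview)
Your overall architecture---bootstrap on $[0,T]$, propagate the $u$-bounds from $\tilde R_\pm>0$ via $\partial_+u<0,\ \partial_-u>0$, then close the bootstrap by checking signs in \eqref{eq of partial R}---is exactly the paper's strategy. Your initial-data check is in fact more careful than the paper's (you correctly observe the constraint $\alpha/v_0\le\beta$). The difference between your Gr\"onwall framing and the paper's ``first touching point'' argument is cosmetic.

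The genuine gap is in what you flag as the ``principal difficulty.'' You propose to verify $\tilde a_{12},\tilde a_{22},\tilde a_{13},\tilde a_{23}\ge 0$ on the region $\{0\le u\le 1,\ v\ge 1\}$ by taking $\alpha$ large. This fails on two counts. First, the positivity of $\tilde a_{22}$ genuinely requires $u-v^{-1}>0$: the paper's explicit computation gives
\[
\tilde a_{22}=(u-v^{-1})\Bigl[2\alpha\bigl(1-\sqrt{\tfrac{v^2-u^2v^2-1-2uv}{v^2-u^2v^2-1+2uv}}\bigr)+\bigl(2-\sqrt{\tfrac{v^2-u^2v^2-1-2uv}{v^2-u^2v^2-1+2uv}}\bigr)\Bigr],
\]
and the bracket is \emph{positive} for every $\alpha\ge 0$ (since $uv>0$ makes the radicand $<1$). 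Hence if $u-v^{-1}<0$ then $\tilde a_{22}<0$ no matter how large $\alpha$ is; enlarging $\alpha$ makes things worse, not better. Second, once you know $u-v^{-1}>0$, the paper shows all four coefficients are positive already for every $\alpha\ge 1$, so no largeness is needed.

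The missing ingredient is therefore the propagation of $uv>1$ inside the bootstrap. This is where assumption (A1) (specifically $\bar u(r_2)>v_0^{-1}$) enters: from $\tilde R_\pm>0$ you get $\partial_\pm v>0$, hence $\partial_0 v=\tfrac12(\partial_+v+\partial_-v)>0$ along each $C_0$-characteristic, so $v\ge v_0$; combined with your $u\ge u_2=\bar u(r_2)>v_0^{-1}\ge v^{-1}$ this yields $u-v^{-1}>0$ throughout $\Lambda(t^*)$. With that in hand, the sign analysis goes through for all $\alpha\ge 1$ (your implicit-differentiation idea for $\partial_u\mathcal F,\partial_v\mathcal F$ via \eqref{F eq} is a perfectly good alternative to the paper's direct computation from \eqref{def of F}). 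Without it, the argument does not close.
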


\begin{proof}
	We shall prove this lemma by the argument of continuity. First, we will check the estimate \eqref{lem R} is valid for the initial data. Noting the second equation of \eqref{u v eq3}, one has
	\begin{equation*} 
		\begin{aligned}
			v_{t} = -uv_{r} - vu_r-\frac{uv}{r}.
		\end{aligned}
	\end{equation*}	
	Hence,
	\begin{equation*} 
		\begin{aligned}
			\partial_{\pm} v  = v_{t} + (u \pm v^{-1}) v_{r}  = \pm v^{-1} v_{r} - vu_r-\frac{uv}{r}.
		\end{aligned}
	\end{equation*}	
	According to (A2), we have
	\begin{equation*} 
		\begin{aligned}
			\tilde{R}_{\pm}(r,0) =  -v_0 \left(   \bar{u}^{\prime}(r) + \frac{\bar{u} + \alpha \mathcal{F}_{0}}{r} \right) >0 ,
		\end{aligned}
	\end{equation*}	
	Therefore, the estimates of \eqref{lem R} hold on $\{(r,t)|t=0, r_1 \leq r \leq r_2 \}$.

	Let $P$ be an arbitrary point in the area $\Lambda (T)$. The backward $C_{+}$ and $C_{-}$ characteristic curves are drawn from the point $P$ intersecting the $r$-axis with the points $P_{+}$ and $P_{-}$, respectively. The backward $C_{0}$ curve passing through $P$ intersects the $r$-axis at a point $P_{0}$. We denote by $\Omega_{P}\subset \Lambda (T)$ a closed triangle domain
	bounded by $\widehat{P_{+} P}$, $\widehat{P_{-} P}$	and $\widehat{P_{+} P_{-}}$, see Fig. \ref{fig1}. In the following we shall prove that if the inequalities in \eqref{lem R} hold
	for every point in $\Omega_{P} \backslash \{P\}$, then they also hold at $P$. The proof of this assertion will be given in three steps.
	
	{\textit{Step 1}}.	By the assumption that the estimates in \eqref{lem R} hold for any point in $\Omega_{P} \backslash \{P\}$, we have $\partial_{\pm}v> 0$ in $\Omega_{P} \backslash \{P\}$. Noting \eqref{u c pm}, along the characteristics $C_{+}$ and $C_{-}$, we have
	\begin{equation*} 
		\begin{aligned}
			\partial_{+} u  < 0 \ \ \text{on } \widehat{P_{+} P} \ \ \ \ \text{and} \ \ \ \	\partial_{-} u   > 0 \ \ \text{on } \widehat{P_{-} P}.
		\end{aligned}
	\end{equation*}	
	Thus we obtain 
	\begin{equation*} 
		\begin{aligned}
			u_2 < u(P_{-}) < u(P) < u(P_{+}) < u_1.
		\end{aligned}
	\end{equation*}	
	
	{\textit{Step 2}}. We shall analyze the signitures of coefficients in coupled equations \eqref{partial partial v} and \eqref{eq of partial R} in this step. By \eqref{def of F} and the direct computation, one can get
	\begin{equation} \label{partial u F}
		\begin{aligned}
			\partial_{u}\mathcal{F} & =  \frac{1}{2}\left[ -2uv^2 - \frac{2(v^2-u^{2}v^2-1) ( -2uv^2 ) - 8 uv^2 }{2\sqrt{(v^2-u^{2}v^2-1)^2 -4u^{2}v^{2} }} \right] \\
			& =  -uv^{2} \left(  1- \frac{v^2-u^{2}v^2+1 }{\sqrt{(v^2-u^{2}v^2-1)^2 -4u^{2}v^{2} }} \right) <0 ,
		\end{aligned}
	\end{equation}
	and
	\begin{equation}
		\begin{aligned}
			\partial_{v}\mathcal{F}& =  \frac{1}{2}\left[ 2v(1-u^2) - \frac{2(v^2-u^{2}v^2-1) \cdot(-2v)(1-u^2)  - 8 u^2 v }{2\sqrt{(v^2-u^{2}v^2-1)^2 -4u^{2}v^{2} }} \right]  \\
			& = v \left[ (1-u^2) - \frac{v^2(1-u^{2})^{2} - (1+u^2) }{\sqrt{(v^2-u^{2}v^2-1)^2 -4u^{2}v^{2} }} \right].
		\end{aligned}
	\end{equation}
	Hence, by \eqref{coefficient a} we have
	\begin{equation*} 
		\begin{aligned}
			a_{12} & =u+v^{-1}-2v^{-1}\mathcal{F}- v^{-2} \partial_{u}\mathcal{F} + \partial_{v}\mathcal{F} \\
			& =u+v^{-1}- v^{-1} \left[ (v^2-u^{2}v^2-1)-\sqrt{(v^2-u^{2}v^2-1)^2 -4u^{2}v^{2} }\right]  \\
			& \ \ \ \ + u \left(   1- \frac{v^2-u^{2}v^2+1 }{\sqrt{(v^2-u^{2}v^2-1)^2 -4u^{2}v^{2} }}  \right)\\
			& \ \ \ \   + v \left[ (1-u^2) - \frac{v^2(1-u^{2})^{2} - (1+u^2) }{\sqrt{(v^2-u^{2}v^2-1)^2 -4u^{2}v^{2} }} \right]   \\
			& = (u+v^{-1}) \left( 2- \sqrt{\frac{v^2-u^{2}v^2-1+2uv}{v^2-u^{2}v^2-1-2uv}} \right) ,
		\end{aligned}
	\end{equation*}	
	and
	\begin{equation*} 
		\begin{aligned}
			a_{22} & =u-v^{-1}+2v^{-1}\mathcal{F}- v^{-2} \partial_{u}\mathcal{F} - \partial_{v}\mathcal{F} \\
			& =u-v^{-1}+ v^{-1} \left[ (v^2-u^{2}v^2-1)-\sqrt{(v^2-u^{2}v^2-1)^2 -4u^{2}v^{2} }\right]  \\
			& \ \ \ \ + u \left(   1- \frac{v^2-u^{2}v^2+1 }{\sqrt{(v^2-u^{2}v^2-1)^2 -4u^{2}v^{2} }}  \right) \\
			& \ \ \ \  - v \left[ (1-u^2) - \frac{v^2(1-u^{2})^{2} - (1+u^2) }{\sqrt{(v^2-u^{2}v^2-1)^2 -4u^{2}v^{2} }} \right]   \\
			& = (u-v^{-1}) \left( 2- \sqrt{\frac{v^2-u^{2}v^2-1-2uv}{v^2-u^{2}v^2-1+2uv}} \right),
		\end{aligned}
	\end{equation*}	
	where we use the assumption (A1). Obviously, by \eqref{partial u F}, we also have
	\begin{equation*} 
		\begin{aligned}
			a_{13} = a_{23} = uv(2u-v^{-2}\partial_{u}\mathcal{F})> 0.
		\end{aligned}
	\end{equation*}	
	Noting \eqref{coefficient tilde a}, one has
	\begin{equation} 
		\begin{aligned}
			\tilde{a}_{12} & = 2\alpha (2v^{-1}\mathcal{F}+ v^{-2} \partial_{u}\mathcal{F} - \partial_{v}\mathcal{F}) + {a}_{12} \\
			& = 2\alpha (- a_{12} +u +v^{-1}) + {a}_{12}\\
			& = (u +v^{-1}) \left[  (2 \alpha -1) \sqrt{\frac{v^2-u^{2}v^2-1+2uv}{v^2-u^{2}v^2-1-2uv}} - 2 \alpha+2  \right] > 0,
		\end{aligned}
	\end{equation}	
	for any $ \alpha \geq 1$, and
	\begin{equation}
		\begin{aligned}
			\tilde{a}_{22} & = 2\alpha (2v^{-1}\mathcal{F}- v^{-2} \partial_{u}\mathcal{F} - \partial_{v}\mathcal{F}) + {a}_{22} \\
			& = 2\alpha (a_{22} -(u -v^{-1})) + {a}_{22}\\
			& = (u -v^{-1}) \cdot  2 \alpha \left( -  \sqrt{\frac{v^2-u^{2}v^2-1-2uv}{v^2-u^{2}v^2-1+2uv}} +1 \right) \\
			& \ \ \ \ + (u -v^{-1}) \left(  2 - \sqrt{\frac{v^2-u^{2}v^2-1-2uv}{v^2-u^{2}v^2-1+2uv}} \right) > 0,
		\end{aligned}
	\end{equation}	
	for any $ \alpha \geq 0$. Besides, when $ \alpha =1$, one has
	\begin{equation*} 
		\begin{aligned}
			\tilde{a}_{13} &= ( 3u + v^{-1} + 2v^{-1} \mathcal{F} + v^{-2}  \partial_{u} \mathcal{F} - \partial_{v} \mathcal{F} ) \mathcal{F} + 2u^2 v \\
			& = (- a_{12} +4u +2v^{-1})+ 2u^2 v \\
			& = \left[ 2u + (u + v^{-1}) \sqrt{\frac{v^2-u^{2}v^2-1+2uv}{v^2-u^{2}v^2-1-2uv}} \right] \mathcal{F} + 2u^2 v  > 0,
		\end{aligned}
	\end{equation*}	
	and
	\begin{equation*} 
		\begin{aligned}
			\tilde{a}_{23} &= ( 3u + v^{-1} - v^{-2}  \partial_{u} \mathcal{F} - \partial_{v} \mathcal{F} ) \mathcal{F} + 2uv (u - v^{-2}  \partial_{u} \mathcal{F}) \\
			& = \left[ a_{22} + 2u + v^{-1} \left( 3- v^{2} (1-u^{2}) + \sqrt{\frac{v^2-u^{2}v^2-1-2uv}{v^2-u^{2}v^2-1+2uv}} \right) \right] \mathcal{F} \\
			& \ \ \ \ + 2uv (u - v^{-2}  \partial_{u} \mathcal{F})  > 0.
		\end{aligned}
	\end{equation*}	
	Moreover, it is clear that 
	\begin{equation} \label{tilde a13 a23} 
		\begin{aligned}
			\tilde{a}_{13}>0, \ \tilde{a}_{23}>0,
		\end{aligned}
	\end{equation}	
	when $ \alpha > 1$.

	{\textit{Step 3}}.	Now we prove $\tilde{R}_{+} (P) > 0$. Suppose $\tilde{R}_{+}(P) = 0$ and $\tilde{R}_{-}(P) \geq 0$. Then by the assumption that the inequalities in \eqref{lem R} hold for every point in $\Omega_{P} \backslash \{P\}$, we have $\partial_{-}\tilde{R}_{+} \leq 0$ at $P$. However, by the first equation of \eqref{eq of partial R} and \eqref{tilde a13 a23} at the point $P$, we derive
	\begin{equation*} 
		\begin{aligned}
			\partial_{-} \tilde{R}_{+} & = \frac{\tilde{a}_{12}}{2r} \tilde{R}_{-} + \frac{\tilde{a}_{13}}{r^2} > 0 ,
		\end{aligned}
	\end{equation*}	
	which leads to a contradiction. Hence, we have
	$\tilde{R}_{+}(P) > 0$. Similarly, one can get $\tilde{R}_{-}(P)> 0$. Therefore we finish the proof of the
	assertion. 
	
	Consequently, we obtain the estimate \eqref{lem R}. We complete the proof of this lemma.
	
\end{proof}

\subsection{Proof of Theorem \ref{main thm}}


Now we give the proof of Theorem \ref{main thm}.	Let the characteristics $C_{0}^{\eta_1}: r = r_{0}(t;\eta_1)$ and $C_{0}^{\eta_2}: r = r_{0}(t;\eta_2)$ be determined by
	\begin{equation*} 
		\begin{aligned}
			\begin{split}
				\left\lbrace
				\begin{array}{lr}	
					\frac{\mathrm{d}r_{0}(t;\eta_1)}{\mathrm{d}t} = u(r_{0}(t;\eta_1),t)	, \ t>0, \\
					r_{0}(0;\eta_1) =\eta_1	,			 
				\end{array}	
				\right.
			\end{split}	
		\end{aligned}
	\end{equation*}
	and
	\begin{equation*} 
		\begin{aligned}
			\begin{split}
				\left\lbrace
				\begin{array}{lr}	
					\frac{\mathrm{d}r_{0}(t;\eta_2)}{\mathrm{d}t} = u(r_{0}(t;\eta_2),t)	, \ t>0, \\
					r_{0}(0;\eta_2) =\eta_2,				 
				\end{array}	
				\right.
			\end{split}	
		\end{aligned}
	\end{equation*}
	respectively. By the conservation law of the second equation in \eqref{u v eq3}, we have
	\begin{equation}
		\begin{aligned}
			\int_{r_{0}(0;\eta_1)}^{r_{0}(0;\eta_2)} [ (rv)_t + (ruv)_r ] \mathrm{d} r = 0,
		\end{aligned}
	\end{equation}	
	and thus
	\begin{equation} \label{conservation of mass}
		\begin{aligned}
			\frac{\mathrm{d}}{\mathrm{d}t} \int_{r_{0}(0;\eta_1)}^{r_{0}(0;\eta_2)} r v (r,t) \mathrm{d}r = 0.
		\end{aligned}
	\end{equation}	
	By virtue of \eqref{u c pm} and \eqref{lem R}, one has
	\begin{equation} 
		\begin{aligned}
			\partial_{0} u &= - \frac{v^{-2}}{2}(\partial_{+} v - \partial_{-} v) - \frac{v^{-2} \mathcal{F}}{r}  \\
			& = \partial_{0} (v^{-1}) + v^{-2}\left( \partial_{-} v - \frac{\mathcal{F}}{r} \right) \\
			& > \partial_{0} (v^{-1}) + v^{-2} \tilde{R}_- \\
			& > \partial_{0} (v^{-1}),
		\end{aligned}
	\end{equation}	
	where $\partial_{0}  = \partial_{t} + u \partial_{r} = \frac{1}{2} (\partial_{+} + \partial_{-} )$. Integrating the above inequality along the characteristic $C_{0}^{\eta_1}$ and noting \eqref{lem R}, we immediately have
	\begin{equation} \label{3.12}
		\begin{aligned}
			u > \bar{u} (\eta_1) - v_{0}^{-1} \ \ \ \ \text{on } C_{0}^{\eta_1}.
		\end{aligned}
	\end{equation}	
	On the other hand, one can get
	\begin{equation} 
		\begin{aligned}
			\partial_{0} u & = - v^{-2}(\partial_{+} v - \partial_{0} v) -\frac{ v^{-2}\mathcal{F}}{r} \\
			& = -\partial_{0} (v^{-1}) - v^{-2}\left( \partial_{+} v - \frac{\mathcal{F}}{r} \right) \\
			& < - \partial_{0} (v^{-1}) - v^{-2} \tilde{R}_- \\
			& < - \partial_{0} (v^{-1}).
		\end{aligned}
	\end{equation}	
	Integrating the above inequality along the characteristic $C_{0}^{\eta_2}$ and noting \eqref{lem R}, we have
	\begin{equation} \label{3.14}
		\begin{aligned}
			u < \bar{u} (\eta_2) + v_{0}^{-1} \ \ \ \ \text{on } C_{0}^{\eta_2}.
		\end{aligned}
	\end{equation}	
	Noting \eqref{u c pm} and \eqref{lem R}, we have $ \partial_{+}(u+v^{-1}) < 0 $ and $ \partial_{-}(u-v^{-1}) > 0 $. Thus, on the characteristic curve $r=r_{+}(t)$, we get
	\begin{equation} \label{3.15 1}
		\begin{aligned}
			u+v^{-1}<u_1+v^{-1}_0 .		 
		\end{aligned}
	\end{equation}
	Similarly, on the characteristics $r=r_{-}(t)$, one can get
	\begin{equation} \label{3.15 2}
		\begin{aligned}
			u-v^{-1}>u_2-v^{-1}_0.			 
		\end{aligned}
	\end{equation}
	Combining \eqref{3.12}, \eqref{3.14}, \eqref{3.15 1} and \eqref{3.15 2}, we obtain
	\begin{equation} 
		\begin{aligned}
			& r_{-}(t) - r_{0}(t;\eta_2) > r_{2}-\eta_2-( \bar{u} (\eta_2) - u_2 + 2v^{-1}_{0} ) t, \\
			& r_{0}(t;\eta_1)-r_{+}(t) > \eta_1-r_{1}-( u_1 - \bar{u} (\eta_1)  + 2v^{-1}_{0} ) t,
		\end{aligned}
	\end{equation}	
	which yields that
	\begin{equation} 
		\begin{aligned}
			r_{0}(t;\eta_2) - r_{0}(t;\eta_1) < \eta_2-\eta_1 - (\bar{u} (\eta_1) - \bar{u} (\eta_2)-2v^{-1}_{0})t .
		\end{aligned}
	\end{equation}	
	Hence, it follows from (A3) that if system \eqref{u v eq3} with initial data \eqref{ID} admits a classical solution, then there exists a $0<t^{*}<\frac{\eta_2-\eta_1 }{\bar{u} (\eta_1) - \bar{u} (\eta_2)-2v^{-1}_{0}}$ such that 
	\begin{equation*} 
		\begin{aligned}
			r_{+}(t^{*}) < r_{0}(t^{*};\eta_2) = r_{0}(t^{*};\eta_1) < r_{-}(t^{*}),
		\end{aligned}
	\end{equation*}	
	and
	\begin{equation*} 
		\begin{aligned}
			r_{+}(t) < r_{0}(t;\eta_2) < r_{0}(t;\eta_1) < r_{-}(t) \ \ \ \ \text{for } t<t^{*}.
		\end{aligned}
	\end{equation*}	
	Consequently, by \eqref{conservation of mass} we obtain that there exists a $t_{*} \in (0, t^{*}]$ such that system \eqref{u v eq3} with initial data \eqref{ID} admits a classical solution in $\Lambda(t_{*})$ and 
	$$ 	\lim\limits_{t \rightarrow t_{*}^{-}} \left\| v (\cdot,t) \right\|_{0;\Lambda(t)} = + \infty. $$
	This completes the proof of Theorem \ref{main thm}.

We end up this article with an application of Theorem \ref{main thm} showing the singularity formation of the relativistic membrane equation. Recall that 
\begin{equation}
	\begin{aligned}
		u=-\frac{\phi_{r}\phi_{t}}{1+\phi_{r}^{2}}, \ 
		v=\frac{1+\phi_{r}^{2}}{\sqrt{\Delta}},
	\end{aligned}
\end{equation}
and $ \Delta = 1+\phi_{r}^{2}-\phi_{t}^{2} $. The vanishing of $\Delta$ causes the blow-up of $v$ since $\phi_{r}$ is assumed to be regular in the domain $\Lambda(t_*)$. This
 precisely characterizes the nature of singularities. Indeed, the singularity occurs when the hypersurface turns from being timelike to being null. 

\section*{Acknowledgement}

Lv Cai is partially supported by Shanghai Post-doctoral Excellence Program (Grant No.2024223). Jianli Liu is partially supported by Natural Science Foundation of Shanghai Municipality (Grant No.25ZR1401123).

$\,$

$\,$


\begin{thebibliography}{99}

\bibitem{Allen06}
  P. Allen, L. Andersson, J. Isenberg, \textit{Timelike minimal submanifolds of general co-dimension in Minkowski space time},
  J. Hyperbolic Differ. Equ. 3 (4) (2006) 691–700.
  
\bibitem{AuriliaChristodoulou79}
A. Aurilia, D. Christodoulou, \textit{Theory of strings and membranes in an external field. I. General formulation},
J. Math. Phys. 20 (7) (1979) 1446–1452.

\bibitem{Brenier02}
Y. Brenier,
\textit{Some geometric PDEs related to hydrodynamics and electrodynamics}, In
Proceedings of the International Congress of Mathematicians, Vol. III (Beijing, 2002), Higher Ed. Press, Beijing, 2002.
  
  
\bibitem{Christodoulou09}
D. Christodoulou,
\textit{The formation of black holes in general relativity}, European Mathematical Society (EMS), Z{\"u}rich, 2009.
  


\bibitem{DonningerKS16}
R. Donninger, J. Krieger, J. Szeftel, W. Wong,
\textit{Codimension one
	stability of the catenoid under the vanishing mean curvature flow in Minkowski space}, Duke Math. J. 165 (4) (2016) 723–791.
	
	
\bibitem{Eggers09}
J. Eggers, J. Hoppe,
\textit{Singularity formation for time-like extremal hypersurfaces}, Phys. Lett. B 680 (3) (2009) 274–278.


\bibitem{EggersHoppeHynek15}
J. Eggers, J. Hoppe, M. Hynek, N. Suramlishvili,
\textit{Singularities of relativistic membranes}, Geom. Flows 1 (1) (2015) 17–33.


\bibitem{Gibbons98}
G. W. Gibbons,
\textit{Born-Infeld particles and Dirichlet p-branes}, Nuclear Phys. B 514 (3) (1998) 603–639.

  



\bibitem{Hoppe13}
J. Hoppe,
\textit{Relativistic membranes}, J. Phys. A 46 (2) (2013) 30, Paper No. 023001.


\bibitem{Klainerman84}
S. Klainerman,
\textit{The null condition and global existence to nonlinear wave equations}, AMS, 1986.


  
\bibitem{KongSunZhou06}
D. Kong, Q. Sun, and Y. Zhou,
\textit{The equation for time-like extremal surfaces in Minkowski space $\mathbb{R}^{2+n}$}, J. Math. Phys. 47 (1) (2006) 16, Paper No. 013503.


\bibitem{KongTsuji99}
D. Kong, M. Tsuji,
\textit{Global solutions for $2 \times 2$ hyperbolic systems with linearly
	degenerate characteristics}, Funkcial. Ekvac. 42 (1) (1999) 129–155.


\bibitem{KongWei14}
D. Kong, C. Wei,
\textit{Formation and propagation of singularities in one-dimensional Chaplygin gas}, J. Geom. Phys. 80 (2014) 58–70.


\bibitem{KongWeiZhang14}
D. Kong, C. Wei, Q. Zhang,
\textit{Formation of singularities in one-dimensional Chaplygin gas}, J. Hyperbolic Differ. Equ. 11 (3) (2014) 521–561.


\bibitem{KongZhang07}
D. Kong, Q. Zhang, Q. Zhou,
\textit{The dynamics of relativistic strings moving in the Minkowski space $\mathbb{R}^{1+n}$}, Comm. Math. Phys. 269 (1) (2007) 153–174.


\bibitem{LaiZhu22}
G. Lai, M. Zhu,
\textit{Formation of singularities of solutions to the compressible Euler
	equations for a Chaplygin gas}, Appl. Math. Lett. 129 (2022) 8, Paper No. 107978.


\bibitem{Lax86}
D. Lax,
\textit{Hyperbolic systems of conservation laws in several space variables}, In
current topics in partial differential equations, Kinokuniya, Tokyo, 1986.


\bibitem{LiYu85}
T. Li, W. Yu,
\textit{Boundary value problems for quasilinear hyperbolic systems}, volume V of Duke University Mathematics Series. Duke University, Mathematics
Department, Durham, NC, 1985.

\bibitem{Lindblad04}
H. Lindblad,
\textit{A remark on global existence for small initial data of the minimal surface
	equation in Minkowskian space time}, Proc. Amer. Math. Soc., 132 (4) (2004) 1095–1102.

\bibitem{LiuZhou07}
J. Liu, Y. Zhou,
\textit{Asymptotic behaviour of global classical solutions of diagonalizable quasilinear
	hyperbolic systems}, Math. Methods Appl. Sci. 30 (2007) 479–500.
	
\bibitem{LiuZhou08}
J. Liu, Y. Zhou,
\textit{Initial-boundary value problem for the equation of time-like extremal surfaces
	in Minkowski space}, J. Math. Phys. 49 (2008) Paper No. 043507.
	
		
\bibitem{LiuZhou09}
J. Liu, Y. Zhou,
\textit{The initial-boundary value problem on a strip for the equation of time-like extremal surfaces}, Discrete Contin. Dyn. Syst. 23 (2009) 381–397.


\bibitem{LiuZhou24}
J. Liu, Y. Zhou,
\textit{Global nonlinear stability of traveling wave solution to time-like
	extremal hypersurface in Minkowski space}, Int. Math. Res. Not. 8 (2024) 6966–7000.
  
  
 \bibitem{WangWei22}
J. Wang and C. Wei,
 \textit{Global existence of smooth solution to relativistic
 	membrane equation with large data}, Calc. Var. Partial Differential Equations, 61 (2) (2022) 58, Paper No. 55.
 	
 	 \bibitem{WangWei23}
 	J. Wang and C. Wei,
 	\textit{A globally smooth solution to the relativistic string
 		equation}, J. Geom. Anal. 33 (7) (2023) 30, Paper No. 205.
 
 
 \bibitem{Wong11}
 W. Wong,
 \textit{Regular hyperbolicity, dominant energy condition and causality
 	for Lagrangian theories of maps}, Classical Quantum Gravity, 28 (21) (2011) 23, Paper No. 215008.
 
 
 \bibitem{Wong18}
W. Wong,
\textit{Singularities of axially symmetric time-like minimal submanifolds in Minkowski space}, J. Hyperbolic Differ. Equ. 15 (1) (2018) 1–13.
  
  
  

\end{thebibliography}
\end{document}